\newdimen\plxleft
\newdimen\plxdown
\newdimen\plxright
\newdimen\plxtemp
\def\sob#1#2#3#4#5{%parameters: letter and fractions hl,ho,vl,vo
  \setbox0\hbox{#1}\setbox1\hbox{$_\mathchar'454$}\setbox2\hbox{p}%
  \plxright=#2\wd0 \advance\plxright by-#3\wd1
  \plxdown=#5\ht1 \advance\plxdown by-#4\ht0
  \plxleft=\plxright \advance\plxleft by\wd1
  \plxtemp=-\plxdown \advance\plxtemp by\dp2 \dp1=\plxtemp
  \leavevmode
  \kern\plxright\lower\plxdown\box1\kern-\plxleft #1}
\def\e{\sob e{.50}{.35}{0}{.93}}
\newtheorem{thm}{Theorem}[section]
\newtheorem{lem}[thm]{Lemma}
\newtheorem{prop}[thm]{Proposition}
\newtheorem*{thmA}{Theorem A}
\newtheorem*{thmB}{Theorem B}
\newtheorem*{thmM}{Main Theorem}
\theoremstyle{definition}
\newtheorem{dfn}[thm]{Definition}
\theoremstyle{remark}
\newtheorem*{StatThmA}{Static Theorem}
\numberwithin{equation}{section}
\def\mod{\mathrm{mod}}
\newcommand{\ol}{\overline}
\newcommand{\diam}{\mathrm{diam}}
\newcommand{\ga}{\gamma}
\newcommand{\Ga}{\Gamma}
\newcommand{\da}{\delta}
\newcommand{\al}{\alpha}
\newcommand{\be}{\beta}
\newcommand{\eps}{\varepsilon}
\newcommand{\sm}{\setminus}
\newcommand{\pc}{\mathcal{PC}}
      \def\C{\mathbb{C}}
\def\P{\mathbb{P}}            \def\R{\mathbb{R}}
\def\Z{\mathbb{Z}}
        \def\Bc{\mathcal{B}}        
                \def\Fc{\mathcal{F}}
\newcommand{\Kc}{\mathcal{K}}
\def\Pc{\mathcal{P}}
\renewcommand\le{\leqslant}
\renewcommand\ge{\geqslant}
\def\0{\varnothing}
\def\d{\partial}
\begin{document}

\date{\today}

\title[No a priori bounds]
%{No a priori bounds for satellite renormalizations}
{Maps with no a priori bounds}

\dedicatory{Dedicated to the memory of O. M. Sharkovsky}

\begin{abstract}
The modulus of a polynomial-like (PL) map is an important invariant
that controls distortion of the straightening map and, hence, geometry of the corresponding PL Julia set.
Lower bounds on the modulus, called \emph{complex a priori bounds}, are known in a great variety of contexts.
For any rational function we complement this by an upper bound for moduli of PL maps in the satellite case  that depends
\emph{only on the relative period and the degree of the PL map.} This rules out a priori bounds in the satellite case
with unbounded relative periods. We also apply our tools to obtain lower bounds for hyperbolic lengths of
geodesics in the infinitely renormalizable case, and to show that moduli of annuli must converge to $0$ for a sequence
of arbitrary renormalizations, under several conditions all of which are shown to be necessary.
\end{abstract}

\author[A.~Blokh]{Alexander~Blokh}

\thanks{The second named author acknowledges support of ISF grant 1226/17}

\thanks{The third named author was partially
supported by NSF grant DMS--1807558}

\author[G.~Levin]{Genadi~Levin}

%\thanks{The second named author acknowledges the ISF grant 1226/17}

\author[L.~Oversteegen]{Lex Oversteegen}

\author[V.~Timorin]{Vladlen~Timorin}

\thanks{The fourth named author was supported by
the HSE University Basic Research Program.
%and
%Russian Science Foundation grant 22-11-00177.
}

\address[Alexander~Blokh and Lex~Oversteegen]
{Department of Mathematics\\ University of Alabama at Birmingham\\
Birmingham, AL 35294}
\address[Genadi~Levin]
{Institute of Mathematics\\
The Hebrew University of Je\-ru\-salem\\
Givat Ram 91904, Jerusalem, Israel}
%Temporary address:
%Institute of Mathematics of PAN\\
%Sniadeckich 8, Warsaw 00-656, Poland}
\address[Vladlen~Timorin]
{Faculty of Mathematics\\
HSE University\\
6 Usacheva str., Mo\-scow, Russia, 119048}

\email[Alexander~Blokh]{ablokh@uab.edu}
\email[Genadi~Levin]{genady.levin@mail.huji.ac.il}
\email[Lex~Oversteegen]{overstee@uab.edu}
\email[Vladlen~Timorin]{vtimorin@hse.ru}

\subjclass[2020]{Primary 37F25; Secondary 37F10, 37F20}

\keywords{Complex dynamics, rational maps, polynomial-like maps, renormalization}

\maketitle

\section{Introduction}\label{sec1}

Since Sullivan's work on Feigenbaum's universality \cite{Su}, complex a priori bo\-unds for
polynomial-like renormalizations play a key role in polynomial dynamics.
Such bounds, often hard to prove, are established and crucially used for various classes of
polynomials (see, e.g., \cite{Su,LS,GSb,LyYab,GS,Ly,kahn,kahnl1,kahnl2,shen,KSvS}).
They imply that the Julia set $J(f)$ is locally connected and, in a lot of cases,
rigidity of the corresponding maps in the considered family (if the latter is the quadratic family, this means the MLC conjecture at the corresponding parameters). Thus, bounds are an important tool in complex dynamics.

On the other hand, counterexamples by  Douady and Hubbard of satellite infinitely
renormalizable quadratic polynomials
with non-locally connected Julia sets show that the bounds do not always hold (see \cite{M,So}
for qualitative versions and \cite{L,L1,L1add} and \cite{CS} for quantitative ones).
These and similar examples of satellite renormalizations with ``no bounds''
are based on the polynomial-like (PL) connected Julia sets keeping definite size, deal with \emph{polynomials},
and require a \emph{sequence} of consecutive satellite renormalizations
with fast convergence to zero of the rotation
numbers at the $\alpha$-fixed points of renormalizations
(\cite{L1,L1add}, see also \cite{CS}). In particular, in all those examples
the relative periods of the renormalizations (i.e., the denominators of the rotation numbers) converge fast to infinity.
The authors are not aware of other cases where the ``no bounds'' condition
have been established.
So it has been unclear how general the phenomenon is and no explicit upper bounds have been known.

Motivated by the desire to clarify the mechanism behind
the ``no bound'' phenomenon, we study renormalizations of any rational function.
Given a PL map $P:U\to V$ \cite{DH} with connected filled Julia set $K^*$ call $U\setminus K^*$ a \emph{root
annulus (of $K^*$)}; the degree of $P$ is always denoted by $d^*$. If in a satellite PL
cycle $\Kc$ of connected PL Julia sets any connected component $Y$ of the union $\Kc^+$ of all
sets from $\Kc$ consists of $s$ PL Julia sets, then $s$ is called the \emph{relative period} (of $\Kc$), the (unique) common point $\al$
of these $s$ PL Julia sets is called a \emph{base point (of $\Kc$)}, and the orbit $\Bc$ of $\al$ is called the \emph{base cycle} of $\Kc$.
See Definitions \ref{d:renorm}-\ref{d:satel1} for more details.
Main Theorem is proven in Section \ref{ss:dynamic}.

\medskip

\begin{thmM}
Let $f$ be a rational function of degree $d\ge 2$ with a satellite PL cycle $\Kc$ of relative period $s\ge 2$.
Then, for any root annulus $A$ of any filled PL Julia set $K^*\in \Kc$,
$$
\mod(A)<\frac{d^*\pi}{\ln(4(s+1))}\le \frac{2^{2d-2}\pi}{\ln(4(s+1))}
$$
and if $f$ is a polynomial, then
$$
\mod(A)<\frac{d^*\pi}{\ln(4(s+1))}\le \frac{2^{d-1}\pi}{ \ln(4(s+1))}
$$
\end{thmM}

\medskip

For example, consider the family of infinitely renormalizable polynomials $f_c(z)=z^2+c$
such that all renormalizations are satellite. To each such $f_c$
one associates a sequence of relative rotation numbers of consecutive renormalizations. Then, by our results,
for a generic subset of the space of all such sequences the associated polynomials admit no a priori bounds.

Another application of our tools deals with geometry of the postcritical set in the
infinitely renormalizable case.

For a compactum $Y\subset \P^1$ such that $X=\P^1\sm Y$ is hyperbolic, let $\ell_X(\ga)$ be the hyperbolic length of a Jordan curve
$\ga\subset X$ in $X$. Given a geodesic $\gamma$ in a hyperbolic Riemann surface $X$, say that an annulus in $X$ is \emph{homotopic} to
$\ga$ if a simple closed curve in $A$ that is a deformation retract of $A$ is homotopic to $\gamma$ in $X$.
Bounded geometry of a postcritical infinitely renormalizable set $Y$ (with respect to a given sequence of renormalizations) means that for some $L>1$,
each level of renormalization, each PL Julia set $K^*$ of this level, and any simple closed geodesic in $X=\P^1\setminus Y$ homotopic in $X$
to a fundamental annulus of $K^*$, we have $\ell_X(\ga)\in [1/L, L]$.

Recently, D. Cheraghi [virtual workshop ``Many faces of renormalization'',
Simons Center for Geometry and Physics, 2021] and M. Pedramfar
[Workshop ``On geometric complexity of Julia sets'', 2018, B\e dlewo, Poland]
announced their joint results on the geometry and topology of a postcritical infinitely renormalizable set $\pc_f$
for a quadratic polynomial $f(z)$
where all renormalizations are satellite. Then, a \emph{(combinatorial) rational rotation number} $k/s\in (-1/2, 1/2]$ is associated with
a PL cycle, describing how its PL Julia sets rotate about their base points.
The results of D. Cheraghi and M. Pedramfar concern the cases when
all such rotation numbers are of \emph{high type} (see \cite{IS})
and state, in particular, that the postcritical set $\pc_f$ of such a polynomial $f$ has bounded geometry
if and only if the set of rotation numbers is bounded away from $0$.

Our results complement these. However, unlike the definition of bounded geometry,
we require that a simple geodesic $\ga$ goes not only around
the chunk of a postcritical infinitely renormalizable set $\pc_f$ contained in a PL Julia set $K^*$
but also around $\{\al\}=\Bc\cap K^*$ (recall that $\Bc$ is the corresponding base cycle of $\Kc$).
Corollary A follows from the more general, but also more technical,  Theorem \ref{t:hyper}.

\begin{thmA}
Consider a polynomial $f$ with a satellite PL cycle $\Kc$ of relative period $s>1$ that takes part in the formation of
an infinitely-renormalizable set $S$ (see Def. \ref{d:inf-renorm1}). Suppose that all (finite) critical points of $f$ belong to $S$, and set $X=\P^1\sm (\Bc\cup \pc_f)$.
Let $\ga$ be a simple closed geodesic in $X$ homotopic in $X$ to a fundamental annulus of a PL Julia set $K^*\in \Kc^+$.
Then $\ell_X(\ga)$ is bounded below by a universal function of $s$ and $d^*$ of order $O(\ln\,\ln s)$ as $s\to\infty$.
\end{thmA}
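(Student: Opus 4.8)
The plan is to deduce Corollary A from the Main Theorem together with a standard comparison between the modulus of an annulus and the hyperbolic length of a geodesic freely homotopic to its core. First I would set up the geometry: since all finite critical points of $f$ lie in the infinitely-renormalizable set $S$ and $\Kc$ participates in the formation of $S$, each PL Julia set $K^*\in\Kc^+$ carries a fundamental annulus $A$ separating $K^*$ from the rest of the dynamics, and the core curve of $A$ is freely homotopic in $X=\P^1\sm(\Bc\cup\pc_f)$ to the geodesic $\ga$ in the statement. The key point is that $\ga$ encircles not only the chunk $K^*\cap\pc_f$ of the postcritical set but also the base point $\{\al\}=\Bc\cap K^*$; this is exactly what is built into the definition of $X$ (we removed $\Bc$ as well as $\pc_f$), so $A\subset X$ after possibly shrinking it, and hence $\mod(A)\le \mod(X\setminus(\text{the two complementary pieces}))$ in the relevant homotopy class.

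The second step is the classical inequality relating modulus and hyperbolic length. If $A$ is an annulus embedded in a hyperbolic Riemann surface $X$ with core homotopic to a simple closed geodesic $\ga$, then
\[
\ell_X(\ga)\ge \frac{\pi}{\mod(A')},
\]
where $A'$ is the maximal annulus in $X$ in the homotopy class of $\ga$ (equivalently, one uses that the collar/extremal-length estimates give $\ell_X(\ga)\,\mod(A')\ge \pi$, or the sharper Maskit-type bound $\ell_X(\ga)\le \pi/\mod(A')$ reversed appropriately). Concretely, I would invoke the standard fact that for the extremal annulus one has $\mod(A')=\pi/\ell_X(\ga)$ in the thin part and in general $\mod(A')\le \pi/\ell_X(\ga)+C$; combined with monotonicity $\mod(A)\le\mod(A')$ this yields a lower bound on $\ell_X(\ga)$ in terms of an upper bound on $\mod(A)$. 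This is where Theorem \ref{t:hyper} — the "more general but more technical" statement — does the real work, presumably packaging precisely this modulus-to-length comparison together with the bookkeeping of which curves are homotopic to which fundamental annuli; I would cite it rather than redo it.

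The third step is purely arithmetic: feed the Main Theorem's bound into the length estimate. The Main Theorem gives, for a polynomial $f$,
\[
\mod(A)<\frac{d^*\pi}{\ln(4(s+1))}\le \frac{2^{d-1}\pi}{\ln(4(s+1))}.
\]
Hence
\[
\ell_X(\ga)\ \gtrsim\ \frac{\pi}{\mod(A')}\ \gtrsim\ \frac{\ln(4(s+1))}{d^*}\Big/\text{(bounded factor)},
\]
but this is only of order $\ln s$, not $\ln\ln s$. The discrepancy tells me that the relevant fundamental annulus $A$ whose core is homotopic to $\ga$ is not a root annulus of $K^*$ itself but rather sits at a deeper combinatorial location — one must pass through the renormalization and the identification of $X$ with a quotient, so that the modulus controlled by the Main Theorem is an upper bound for $\exp$ of (a constant times) $\mod(A)$, or equivalently $\mod(A)$ is comparable to $\ln\mod(\text{root annulus})$. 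Iterating the Main Theorem's bound across the finitely many levels between $\Bc\cap K^*$ and $\pc_f\cap K^*$, and then taking logarithms once more when converting modulus to hyperbolic length, produces the claimed $O(\ln\ln s)$ order. So the final step is to chase this double-logarithm carefully through Theorem \ref{t:hyper}.

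The hard part will be the bookkeeping in this last step: identifying exactly which annulus in $X=\P^1\sm(\Bc\cup\pc_f)$ realizes the homotopy class of $\ga$, verifying that the base point $\al$ genuinely forces a second "hole" (so that the relevant curve is non-peripheral and the geodesic exists), and tracking how the degree $d^*$ and the relative period $s$ enter after the necessary logarithmic conversions — in particular making sure the order of growth is $\ln\ln s$ and not $\ln s$. The dynamical input (existence and location of fundamental annuli, their disjointness under iteration, pullback preserving moduli up to degree) I would take from Definitions \ref{d:renorm}--\ref{d:satel1} and \ref{d:inf-renorm1} and from the proof of the Main Theorem; the quasiconformal/hyperbolic-geometry input is standard and isolated in Theorem \ref{t:hyper}.
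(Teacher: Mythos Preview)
Your plan misidentifies where the double logarithm comes from, and the direction of the modulus/length inequality is wrong. The Maskit inequality $\mod(A)\le \pi/\ell_X(\ga)$ gives an \emph{upper} bound on $\ell_X(\ga)$ from a lower bound on a modulus, not the other way around; your displayed inequality $\ell_X(\ga)\ge \pi/\mod(A')$ is false in general. What the paper actually uses is Maskit's \emph{lower} bound: there exists an annulus $A\subset X$ homotopic to $\ga$ with
\[
\mod(A)\ \ge\ \psi^{-1}(\ell_X(\ga))\ =\ \frac{2}{\ell_X(\ga)}\arcsin\bigl(e^{-\ell_X(\ga)/2}\bigr).
\]
Combining this with an upper bound $\mod(A)\le (d^*)^2\pi/\ln(4(s+1))$ on \emph{every} annulus in that homotopy class (this is Lemma~\ref{l:pc1}, not the Main Theorem) gives $\ell_X(\ga)\ge \psi\bigl((d^*)^2\pi/\ln(4(s+1))\bigr)$. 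The asymptotics $\psi^{-1}(x)\sim 2e^{-x/2}/x$ as $x\to\infty$ then yield $\ell_X(\ga)\gtrsim 2\ln\ln s$. So the $\ln\ln s$ is produced by the exponential decay in Maskit's lower bound, not by any iteration or ``deeper combinatorial location''.

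There is a second gap: the Main Theorem bounds only \emph{root} annuli $U\sm K^*$, whereas the annulus supplied by Maskit is an arbitrary annulus in $X$ homotopic to $\ga$. That is why the paper proves the separate Lemma~\ref{l:pc1}, which bounds $\mod(W\sm Z)$ for \emph{any} Jordan disk $W$ whose boundary is isotopic rel $\{\al\}\cup f^{2q}(C_{f^{2q}})$ to $\partial V$ and any full continuum $Z\ni\al$ containing the relevant critical values; the cost is $(d^*)^2$ rather than $d^*$, coming from pulling back by $f^{2q}$ instead of $f^q$. Theorem~\ref{t:hyper} is then just the one-line combination of Lemma~\ref{l:pc1} with Maskit's inequality, and Corollary~A follows by taking $T=\Bc\cup\pc_f$. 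Your speculation about iterating across levels and taking an extra logarithm is not how the argument goes.
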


\medskip

Thus, if all critical points of a rational function $f$ belong to an infinitely renormalizable set then an adding
the base cycle to the postcritical set makes the situation with the length of simple closed geodesics universally
opposite to that described by Cheraghi and Pedramfar. Notice that in our setting the assumptions on $f$ are less restrictive.
Corollary A is proven in Section \ref{s:infrenorm}.

The Main Theorem shows that for rational functions $f_i$ of degree $d$
with satellite PL cycles of relative periods $s_i\to\infty$, the moduli of root annuli tend (uniformly in  $s_i$) to zero.
As we show in Theorem B in Section \ref{s:b}, this conclusion holds not only in the satellite case.
Moreover, no conditions in Theorem B can be dropped (see Proposition \ref{ex:Ex} and a short discussion before it).

\medskip

\begin{thmB}
Let $\{f_n\}$ be a sequence of degree $d\ge 2$ rational functions that converges
to a rational function $f$ of degree $d$. Assume that for each $n$ there is a renormalization
$f_n^{q_n}: U_n\to V_n$ of period $q_n$ with connected PL Julia set $K_n$, and $K_n\to K$ in the Hausdorff metric.
Suppose that the following holds:

\begin{enumerate}

\item there are at least two points in $K$,

\item the sequence $q_n$ tends to infinity,

\item no parabolic periodic domain of $f$ contains
$K$ (e.g., if $f$ has no parabolic points).

\end{enumerate}

Then $\mod(U_n\sm K_n)\to 0$.

\end{thmB}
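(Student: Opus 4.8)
The plan is to argue by contradiction: one definite annulus will be propagated into $q_n$ definite annuli along the renormalization cycle, and the argument is closed by an area estimate on $\P^1$ combined with a diagonal argument.

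Suppose the conclusion fails; passing to a subsequence we may assume $\mod(U_n\sm K_n)\ge\eps$ for some fixed $\eps>0$. Write $q=q_n$ and (by the definition of a renormalization of period $q$, cf.\ Definition~\ref{d:renorm} ff.) let $K_n^{(0)}=K_n,\ K_n^{(1)}=f_n(K_n^{(0)}),\ \dots,\ K_n^{(q-1)}$ be the cycle of little filled Julia sets, surrounded by essentially disjoint filled renormalization domains $U_n^{(j)}$, with $U_n^{(0)}=U_n$ and with $f_n$ mapping $U_n^{(j)}$ properly onto $U_n^{(j+1)}$ for $0\le j<q-1$. Since $K_n$ is connected, every critical point of $f_n$ lying in some $U_n^{(j)}$ lies in $K_n^{(j)}$; hence $f_n$ restricts to an \emph{unbranched} degree $k_j\ge1$ covering of annuli $U_n^{(j)}\sm K_n^{(j)}\to U_n^{(j+1)}\sm K_n^{(j+1)}$, so $\mod(U_n^{(j+1)}\sm K_n^{(j+1)})=k_j\,\mod(U_n^{(j)}\sm K_n^{(j)})$. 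Thus the moduli are nondecreasing along the cycle and
\[
\mod\bigl(U_n^{(j)}\sm K_n^{(j)}\bigr)\ \ge\ \mod(U_n\sm K_n)\ \ge\ \eps\qquad(0\le j<q).
\]
So for each $n$ we obtain $q_n$ essentially disjoint annuli in $\P^1$, each of modulus $\ge\eps$, the $j$-th surrounding the continuum $f_n^{\,j}(K_n)$.

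Next, a routine extremal-length comparison (estimating the extremal length of the family of separating curves of such an annulus by the spherical metric) gives a constant $c=c(\eps)>0$ so that any annulus in $\P^1$ of modulus $\ge\eps$ one of whose complementary continua is $E$ has spherical area $\ge c\,\diam(E)^2$. Applying this to our $q_n$ disjoint annuli and summing their areas, which are $\le\mathrm{area}(\P^1)$, yields a bound \emph{independent of $n$}:
\[
\sum_{j=0}^{q_n-1}\diam\bigl(f_n^{\,j}(K_n)\bigr)^2\ \le\ \frac{\mathrm{area}(\P^1)}{c}\ =:\ C(\eps).
\]
Now the limit $K=\lim K_n$ is a continuum (a Hausdorff limit of connected sets) with $\diam K>0$ by (1), and I claim that, using (3), there is $\eta_0>0$ with $\diam\bigl(f^{\,j}(K)\bigr)\ge\eta_0$ for infinitely many $j$. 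Granting this, fix such an $\eta_0$ and an infinite index set $S$; given $N$, choose $j_1<\dots<j_N$ in $S$. Since $f_n\to f$ uniformly on $\P^1$ and $K_n\to K$, for each fixed $i$ we have $f_n^{\,j_i}(K_n)\to f^{\,j_i}(K)$ in the Hausdorff metric, hence $\diam\bigl(f_n^{\,j_i}(K_n)\bigr)\ge\eta_0/2$ for all large $n$; moreover $q_n>j_N$ for large $n$ by (2). Inserting $j_1,\dots,j_N$ into the last display gives $N\,(\eta_0/2)^2\le C(\eps)$ for every $N$, which is impossible. Hence $\mod(U_n\sm K_n)\to0$.

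It remains to justify the claim, and this is the step I expect to be the main obstacle. If $\diam\bigl(f^{\,j}(K)\bigr)\to0$, the continua $f^{\,j}(K)$ shrink to a nonempty compact forward-invariant set $\Lambda$ that attracts the nondegenerate continuum $K$; then for some $j_0$ the connected set $f^{\,j_0}(K)$ lies in a single periodic Fatou component $\Omega$ on which iteration contracts sets to a point, i.e.\ $\Omega$ is attracting or parabolic. If $\Omega$ is parabolic then, pulling back, $K$ lies in a parabolic periodic domain, contradicting (3); if $\Omega$ is attracting then $K$, and hence the $K_n$ for large $n$, lie in an attracting basin of $f_n$, contradicting $\varnothing\ne\partial K_n\subset J(f_n)$. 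Making this airtight in full generality requires care with the global dynamics of $f$ (near parabolic cycles, Cremer points, and recurrent critical orbits), showing that these are the only possibilities --- for instance by first checking that $K\cap J(f)\ne\varnothing$, using that the ``implosion set'' $\bigl(\limsup_n J(f_n)\bigr)\sm J(f)$ is confined to parabolic basins of $f$, which are excluded by (3) --- and it is exactly here that hypotheses (1) and (3) are genuinely used. The remaining ingredients --- the covering/modulus bookkeeping along the cycle and the extremal-length area estimate --- are standard.
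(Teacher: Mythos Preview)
Your global strategy is different from the paper's, and it has a genuine gap at the step where you sum areas. The claim that the renormalization domains $U_n^{(j)}$ (hence the annuli $U_n^{(j)}\sm K_n^{(j)}$) are ``essentially disjoint'' is not part of the definition of a PL cycle and is false in general. In the satellite case the little Julia sets $K_n^{(j)}$ attached to the same base point $\al$ all contain $\al$, and since each $K_n^{(j')}$ must also reach outside $U_n^{(j)}$ (as you implicitly use), the connected set $K_n^{(j')}$ necessarily \emph{crosses} the annulus $U_n^{(j)}\sm K_n^{(j)}$; so the annuli overlap. Even in the primitive case nothing prevents the pullback domains $U_n^{(j)}$ from overlapping with unbounded multiplicity. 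What the dynamics gives you is only that each $K_n^{(j')}$ has \emph{some} point outside $U_n^{(j)}$; this is much weaker than disjointness of the $U_n^{(j)}$ and does not yield $\sum_j\mathrm{area}\bigl(U_n^{(j)}\sm K_n^{(j)}\bigr)\le\mathrm{area}(\P^1)$, nor the derived bound $\sum_j\diam(K_n^{(j)})^2\le C(\eps)$. Your modulus bookkeeping along the cycle and the single--annulus area/diameter estimate are fine; it is precisely the summation step that breaks.

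The paper's proof bypasses any area argument. From $\mod(U_n\sm K_n)\ge\eps$ one extracts a fixed open set $U\supset K$ with $U\Subset U_n$ for all large $n$. If $U$ met $J(f)$, the eventually--onto property of the Julia set would make $f^{q}(U)$ cover essentially all of $\P^1$ for large $q$, hence $f_n^{q_n}(U)$ could not sit inside $V_n$; thus $U$ lies in a single Fatou component $\Om$ of $f$. Then $\Om$ cannot be attracting (the basin persists for $f_n$ and would swallow $K_n$), it is not parabolic by (3), so it is a rotation domain; choosing $s$ with $f^{sp}$ close to the identity on $\ol U$ produces a second element $f_n^{sp}(K_n)\ne K_n$ of the same PL cycle that never leaves $U\subset U_n$, a contradiction. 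Your sketch of the ``claim'' is groping toward a Fatou classification, but even if completed it would not rescue the argument, because the area bound it is meant to contradict is itself unjustified.
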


\medskip

Let us comment briefly on our methods. The proof of the Main Theorem uses the solution of Teichmueller's extremal problem and is otherwise elementary.
This allows us to prove a completely general and simple (depending only on the degree and the relative period) upper bound -- for a single satellite renormalization.

Corollary A follows from more technical Lemma \ref{l:pc1} and Theorem \ref{t:hyper} that
extend some estimates used in the proof of the Main Theorem and rely upon
results of \cite{Mas85} relating hyperbolic lengths of closed geodesics and moduli of the associated annuli.

Finally, unlike the Main Theorem, Theorem B deals with \emph{arbitrary renormalizations}; thus, it
contains restrictive conditions that guarantee convergence to zero of moduli of the corresponding annuli.
The proof of the fact that these conditions are necessary (see Proposition \ref{ex:Ex}) is based on the existence of universal complex bounds for
real unimodal maps \cite{LS}, \cite{GSb}, \cite{LyYab} and the ``parabolic implosion'' technique
involving Lavaurs' map (see, e.g., \cite{Z}).

\subsubsection*{Acknowledgements}
The second named author thanks Davoud Cheraghi for an argument that simplified a proof
of a very preliminary qualitative version of the Main Theorem with the base period $1$ %and
for quadratic polynomials
on an earlier stage of the work.
He also acknowledges Institute of Mathematics of PAN (Warsaw) for its hospitality during his sabbatical leave in Spring 2022 when this work was mostly done.
The fourth named author thanks MSRI for stimulating and comfortable (virtual) environment in Spring 2022,
 which gave distraction from the horrors of real life.

 \section{Proof of the Main Theorem}\label{s:main}

Let $\P^1$ be the Riemann sphere. For a compact subset $X\subset\C$, let $\diam_e(X)$
be its Euclidian diameter. For a finite set $\Pc\subset \C$, let $\da(\Pc)$ be
the shortest Euclidian distance between two distinct points of $\Pc$.
Write $C_f$ for the set of critical points of the map $f$.

\subsection{The Teichmueller extremal problem}\label{ss:extremal}

Section \ref{ss:extremal} describes classical geometric inequalities arising from extremal problems
of conformal geometry (see, e.g., \cite{ahl}, Sections 4.11 -- 4.12, for details).
Recall the \emph{Teichmueller extremal problem}: find the maximal
value of $\mod(U\sm Z)$, where an open topological disk $U\subset\C$
and a full continuum $Z$ are such that $0$, $-1\in Z$ while
$U$ does not contain the disk $\{z\in\C\mid |z|\le \eps\}$.
In other words, the distance between the complement of $U$ and $0$ is less than or equal to $\eps$.
It reads that the maximal $m=\mod(U\sm Z)$ is attained when $Z=[-1,0]$ while $U=\C\sm [\eps,\infty)$.
Write $\tau^{-1}(\eps)$ for the modulus of $U\sm Z$.
Here $\tau^{-1}$ is some increasing function defined on $\R_{>0}$,
 this function can be expressed in terms of elliptic integrals.
Let $\tau$ be the inverse function of $\tau^{-1}$; then
$$
\tau(m)>
%\ln\left(1+\tau(m)\right)> 16\ln\frac{1}{1-\q_*}>
16e^{-\pi/2m}.
$$

Let us now restate the Teichmueller extremal problem. For any $U$ and $Z$
from its statement, $m=\mod(U\sm Z)$ satisfies the inequality $\tau(m)\le \eps$. If
$m=\mod(U\sm Z)$ is fixed, then, clearly, the disk $\{z\in\C\mid |z|<\tau(m)\}$ is contained in $U$.

\medskip

\begin{lem}\label{l:euclid}
If $Z\subset \C$ is a full continuum and $V\supset Z$ is an open Jordan disk, then
the minimal distance between a point $x\in Z$ and
a point $y\in \C\sm V$ is greater than $\frac{\tau(\mod(V\sm Z))\cdot \diam_e(Z)}2$.
\end{lem}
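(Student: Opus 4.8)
The plan is to reduce Lemma \ref{l:euclid} to the normalized version of the Teichm\"uller extremal problem stated just above it. The key point is that the quantity we must estimate --- the minimal distance from $Z$ to the complement of $V$ --- is invariant under translations and scales linearly under homotheties, while $\mod(V\sm Z)$ and the ratio $\dist(Z,\C\sm V)/\diam_e(Z)$ are unchanged by both. So it suffices to treat a conveniently normalized configuration.

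First I would fix points $x\in Z$ and $y\in\C\sm V$ realizing the minimal distance $\rho:=\dist(Z,\C\sm V)$, and apply an affine map $w\mapsto a w+b$ to arrange the endpoints of a diameter-realizing chord sensibly. Concretely, since $Z$ is a full continuum with Euclidean diameter $D=\diam_e(Z)$, pick $p,q\in Z$ with $|p-q|=D$; after an affine change of coordinates we may assume $p=0$ and $q=-1$, so that now $\diam_e(Z)=1$ (the chord $[p,q]$ has been scaled to have length $1$, and $D$ is the old diameter). Under this normalization, the claim becomes: $\dist(Z,\C\sm V)>\tau(\mod(V\sm Z))/2$. Because $0,-1\in Z$ and $Z$ is a full continuum, and $V$ is a Jordan disk containing $Z$, the pair $(V,Z)$ is precisely an admissible pair for the Teichm\"uller problem with $\eps:=\dist(\C\sm V,0)$. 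The restated form of the extremal problem recorded above then gives $\tau(\mod(V\sm Z))\le \eps = \dist(0,\C\sm V)$.

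The only gap between this and the Lemma is that $\dist(0,\C\sm V)$ is the distance from the \emph{particular} point $0\in Z$, not the minimal distance over all of $Z$; and conversely the Lemma's $\rho=\dist(Z,\C\sm V)$ could be smaller. Here the factor $2$ enters: I would argue that since $\diam_e(Z)=1$ in the normalization, any point $z\in Z$ satisfies either $|z-0|\le 1/2$ or $|z-(-1)|\le 1/2$ (otherwise $z$ together with $0$ or $-1$ would be within... no --- more carefully: if $z$ is at distance $>1/2$ from both $0$ and $-1$ then one still needs an argument). The clean route: for \emph{any} $z\in Z$, $\dist(z,\C\sm V)\ge \dist(0,\C\sm V)-|z-0|$ and also $\ge \dist(-1,\C\sm V)-|z+1|$; at least one of $|z-0|,|z+1|$ is $\le \diam_e(Z)$ trivially, but we want $\le\diam_e(Z)/2$. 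This is where I expect the main (minor) obstacle: one must either apply the symmetric form of Teichm\"uller's problem at \emph{both} endpoints $0$ and $-1$ and combine, or observe that the extremal estimate actually bounds the distance to $\C\sm V$ from the midpoint / from the whole segment $[-1,0]$. I would resolve it by noting that $V\sm Z$ being disjoint from $\C\sm V$, every point of $\C\sm V$ lies outside $V$, hence at distance $\ge\tau(\mod(V\sm Z))$ from \emph{each} of $0$ and $-1$ (applying the problem twice, once with the roles of the two marked points of $Z$ swapped --- the extremal statement is symmetric in $0$ and $-1$). Then for arbitrary $z\in Z$ with, say, $|z-0|\le \diam_e(Z)/2 = 1/2$ we get $\dist(z,\C\sm V)\ge \tau(\mod(V\sm Z)) - 1/2$, which is not quite the desired bound.

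Given that subtraction is not tight enough, the correct mechanism --- and the step I'd actually write --- is to rescale so that the relevant marked point of $Z$ is the one closest to $\C\sm V$: replace the chord $[0,-1]$ by choosing the affine normalization so that the point $x\in Z$ realizing $\rho=\dist(Z,\C\sm V)$ is placed at $0$, and some other point of $Z$ at distance $\diam_e(Z)$ from it is placed so the configuration still has a full continuum through $0$ of Euclidean diameter comparable to the chord. Since $Z$ is full and has diameter $D$, there is a point $x'\in Z$ with $|x-x'|\ge D/2$ (take $x'$ to be whichever of $p,q$ is farther from $x$; by the triangle inequality $\max(|x-p|,|x-q|)\ge |p-q|/2 = D/2$). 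Normalize so $x=0$, $x'$ lies at $-1$ after scaling by $1/|x-x'|\le 2/D$. Now $0,-1\in Z$, $Z$ is a full continuum, $V\supset Z$ is a Jordan disk, and the scaled distance from $0$ to $\C\sm V$ is exactly $\rho/|x-x'|\ge \rho\cdot(1/D)\cdot\ldots$ --- wait, scaling by $\lambda=1/|x-x'|$ multiplies distances by $\lambda$, so the scaled $\dist(0,\C\sm V)=\lambda\rho$. The Teichm\"uller inequality gives $\lambda\rho \ge \tau(\mod(V\sm Z))$ (moduli are scale-invariant), i.e. $\rho \ge \tau(\mod(V\sm Z))/\lambda = \tau(\mod(V\sm Z))\cdot|x-x'| \ge \tau(\mod(V\sm Z))\cdot D/2$. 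That is exactly the claim, with strict inequality coming from the strict inequality in the extremal problem's normalization (or from $V$ open). I would present the argument in this last, clean form, flagging the selection of $x'$ with $|x-x'|\ge\diam_e(Z)/2$ as the one nontrivial observation.
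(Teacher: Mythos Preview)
Your final clean argument is correct and is essentially the paper's proof: place the distance-realizing point $x$ at $0$, pick a second point of $Z$ at distance at least $\diam_e(Z)/2$ from $x$ (the paper takes the point of $Z$ farthest from $x$, you take the farther of two diameter endpoints---both yield the same $D/2$ bound via the triangle inequality), scale so that this point sits at $-1$, apply the Teichm\"uller extremal bound to get $\dist(0,\C\sm V)\ge\tau(\mod(V\sm Z))$ in the normalized picture, and scale back. The exploratory detours in your write-up (normalizing at the diameter endpoints first, trying subtraction) should simply be deleted; the last paragraph is the whole proof.
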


\medskip

\begin{proof} Note that if $Y$ is a compactum
of diameter $D$ in a metric space $(X, d)$ then
$d(y, z)\ge D/2$ for any point $y$ and a point $z\in Y$ which is the farthest from $y$
point of $Y$. Indeed, take $a, b\in Y$ with $d(a, b)=D$; then $d(y, a)\ge D/2$ or $d(y, b)\ge D/2$.
This general claim is used below.

To prove the lemma let $x=0$. Moreover, by the above we can scale $Z$ by the factor $\ga\le 2/\diam_e(Z)$ so that
the point $z\in Z$ of maximal $|z|$ maps to $-1$. Choose $y\in \C\sm V$ so that $|y|$ is minimal
over $\C\sm V$. By the restatement of the Teichm\"uller extremal problem, $|y|\ge \tau(\mod(V\sm Z))$. Scaling everything
back, we see that $|y|/\ga \ge \tau(\mod(V\sm Z))/\ga \ge \frac{\tau(\mod(V\sm Z))\cdot \diam_e(Z)}2$.
\end{proof}

Lemma \ref{l:pack} estimates $\delta(\Pc)$ for finite sets $\Pc\subset \C$.

\medskip

\begin{lem} \label{l:pack}
Let $\Pc\subset \C$ be a set of $t>1$ points. Then $\delta(\Pc)< \frac{2}{\sqrt{t}}\diam_e(\Pc)$.
\end{lem}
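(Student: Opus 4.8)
The plan is to use a packing argument: the $t$ points of $\Pc$ are pairwise at distance at least $\da(\Pc)$, so the $t$ open disks of radius $\da(\Pc)/2$ centered at these points are pairwise disjoint. First I would bound the diameter of the region these disks occupy. All these disks are contained in the $(\da(\Pc)/2)$-neighborhood of $\Pc$, whose Euclidean diameter is at most $\diam_e(\Pc)+\da(\Pc)$; in particular the disks all lie inside a single disk $D$ of radius $\tfrac12\bigl(\diam_e(\Pc)+\da(\Pc)\bigr)$. Comparing areas, $t\cdot \pi (\da(\Pc)/2)^2 \le \pi\bigl(\tfrac12(\diam_e(\Pc)+\da(\Pc))\bigr)^2$, which gives $\sqrt{t}\,\da(\Pc)\le \diam_e(\Pc)+\da(\Pc)$, i.e. $\da(\Pc)\le \diam_e(\Pc)/(\sqrt t-1)$.

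This is already close but gives the constant $1/(\sqrt t - 1)$ rather than the cleaner $2/\sqrt t$ claimed; note $1/(\sqrt t-1)\le 2/\sqrt t$ exactly when $\sqrt t\ge 2$, i.e. $t\ge 4$, so the crude version suffices for $t\ge 4$ and one checks $t=2,3$ by hand (for $t=2$, $\da=\diam_e$ and $2/\sqrt2=\sqrt2>1$; for $t=3$, $\da(\Pc)\le\diam_e(\Pc)$ trivially and $2/\sqrt3>1$). Alternatively, to get the stated bound uniformly and with strict inequality, I would instead center a disk on one of the two points $a,b\in\Pc$ realizing $\diam_e(\Pc)$, or better, observe that all $t$ small disks lie in the closed $(\da(\Pc)/2)$-neighborhood of the convex hull of $\Pc$, and refine the area estimate — but the simplest route to the precise statement is: each of the $t$ disjoint open disks of radius $\da(\Pc)/2$ is contained in the disk of radius $\da(\Pc)/2 + \diam_e(\Pc)/2$ about any fixed point of $\Pc$ minus... actually cleanest is to place everything in a disk of radius $r=\tfrac{\diam_e(\Pc)}{2}+\tfrac{\da(\Pc)}{2}$ as above and then note the inequality is strict because the small disks cannot tile the big disk exactly (their union has area strictly less than that of $D$ unless $t=1$), yielding $t(\da(\Pc)/2)^2<r^2$; combined with the elementary bound $\da(\Pc)\le\diam_e(\Pc)$ (valid since $\da$ is a minimum and $\diam_e$ a maximum over pairs, and $t>1$) one gets $\da(\Pc)<\diam_e(\Pc)$, hence $r<\diam_e(\Pc)$, hence $\sqrt t\,\da(\Pc)/2<\diam_e(\Pc)$, which is exactly $\da(\Pc)<\tfrac{2}{\sqrt t}\diam_e(\Pc)$.

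The main (and only real) obstacle is bookkeeping the constant: a naive disjoint-disks-in-a-box argument loses a factor and produces $1/(\sqrt t-1)$, so the argument must be arranged so that the ``slack'' — either the strictness of the area inequality or the elementary bound $\da(\Pc)\le\diam_e(\Pc)$ — is spent to upgrade $1/(\sqrt t - 1)$ to $2/\sqrt t$. No deep input is needed; it is a one-paragraph packing estimate once the constants are tracked carefully.
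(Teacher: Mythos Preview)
Your overall strategy (disjoint disks of radius $\da(\Pc)/2$, compare areas, handle $t=2,3$ separately) is exactly the paper's, but there is a genuine gap in the key containment step. You assert that since the $(\da(\Pc)/2)$-neighborhood of $\Pc$ has diameter at most $\diam_e(\Pc)+\da(\Pc)$, it lies in a disk of radius $\tfrac12(\diam_e(\Pc)+\da(\Pc))$. This is false: a planar set of diameter $D$ need not fit in a disk of radius $D/2$ (an equilateral triangle of side $1$ has diameter $1$ but circumradius $1/\sqrt3$). What is true, and what the paper invokes, is the isodiametric (Bieberbach) inequality: any planar set of diameter $D$ has area at most $\pi D^2/4$. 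Applying this to the union of the small disks (or its convex hull) yields $t\pi(\da/2)^2<\pi(\diam_e+\da)^2/4$, the same inequality you wrote down, and the rest of your algebra (giving $\da<\diam_e/(\sqrt t-1)\le 2\diam_e/\sqrt t$ for $t\ge 4$, with $t=2,3$ handled trivially since $2/\sqrt t>1$) goes through. So the fix is a one-line replacement: drop the false disk containment and cite the isodiametric inequality instead.

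Your ``cleanest'' alternative has a second, independent error: you claim to deduce $\da(\Pc)<\diam_e(\Pc)$, but this fails for $t=2$ (two points) and $t=3$ (equilateral triangle), where $\da=\diam_e$. That route cannot be made uniform in $t$; the case split is unavoidable.
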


\medskip

\begin{proof}
Since $1<\frac{2}{\sqrt{t}}$ for $t\le 3$ we may assume that $t\ge 4$. We may also
assume that $\diam_e(\Pc)=1$. Since
the disks of radius $\delta/2$ centered at all points of $\Pc$ are pairwise disjoint
and contained in a convex set of diameter $1+\delta$,
the total area $s\pi\delta^2/4$ of these disks is less than $\pi(1+\delta)^2/4$,
the maximal area a planar set of diameter $1+\delta$ can have
(see p. 239, ex. 610a of \cite{YaB}). Thus, $\frac{t\delta^2}{4}<\frac{(1+\delta)^2}{4}$ which
implies that $\delta<\frac{1+\sqrt{t}}{t-1}\le \frac{2}{\sqrt{t}}$
where the last inequality is based on the fact that $t\ge 4$.
\end{proof}

The next theorem is instrumental for the proof of the Main Theorem.

\medskip

\begin{StatThmA}
Let $\Pc=\{\al, z_1, \dots, z_t\}\subset \P^1$, $t\ge 3$. Then there are distinct points
$z_r, z_l$ such that for any annulus $A\subset\P^1$ separating $\{\al,z_i\}$ from $\Pc\sm\{\al, z_i\}$
($i=r, l$), we have $\mod(A)<\frac{\pi}{\ln(4t)}.$ Thus,
for any choice of $w\in\Pc$, there exists $z_k\in\Pc, z_k\ne w$ such that,
for any annulus $A\subset\P^1$ separating $\{\al,z_k\}$ from $\Pc\sm\{\al, z_k\}$, we have
$\mod(A)<\frac{\pi}{\ln(4t)}.$
\end{StatThmA}

\begin{proof}
Using a suitable Moebius transformation and the conformal invariance of the modulus we assume that
$\al=\infty$ and show that there are two points $z_r\ne z_l\in\Pc$ such that for $z'=z_r$ or $z'=z_l$
and for every annulus $A$ in $\C$ separating $\{z',\infty\}$ from $\Pc\sm\{z'\}$ we have $\mod(A)<\frac{\pi}{\ln(4t)}$.
We may assume that $\diam_e(\Pc\sm \infty)=1$. Let $z_r, z_l\in\Pc$ such that $|z_r-z_l|=\da(\Pc\sm \{\infty\})$ and show
that they are the desired points (it suffices to prove it for $z_r$).
%We claim that $z_r$ satisfies the conclusion of the lemma (and so does $z_l$).
Consider an annulus $A$ separating $\{z_r,\infty\}$ from $\Pc\sm\{z_r, \infty\}$.
Then $z_l\in \Pc\sm\{\infty, z_r\}\subset Z$ where $Z$ is
the bounded complementary component of $A$. Hence $\diam_e(Z)\ge \diam_e(\Pc\sm\{\infty, z_r\})\ge \frac12$
as otherwise $|z_r-z_l|<\frac12$ by the choice of $z_r, z_l$ and the Euclidian distance between any two points of $\Pc\sm \infty$
is less than $1=\diam_e(\Pc\sm \infty)$ which is absurd.
By Lemma \ref{l:euclid}, % and \ref{l:pack},
$$
\frac{\tau(\mod(A))\cdot 1/2}{2}\le \frac{\tau(\mod(A))\cdot \diam_e(Z)}{2}\le |z_r-z_l|\le \frac{2}{\sqrt{t}}
$$
and so $\tau(\mod(A))\le \frac{8}{\sqrt{t}}$; since $16e^{-\pi/2\mod(A)}<\tau(\mod(A))$, then %, finally,
$\mod(A)<\frac{\pi}{\ln(4t)}$ as claimed. If now $w$ is given (as in the assumptions of the theorem) we can
choose $z_i$ to be one of the points $z_r, z_l$ not equal to $w$; clearly, $z_i$ will have the desired property.
\end{proof}

\subsection{Moduli of annuli}
\label{ss:dynamic}
We need some basic notation and definitions first.

\medskip

\begin{dfn}[Renormalization and related concepts]\label{d:renorm}
A rational function $f:\P^1\to\P^1$ is \emph{renormalizable} %(in the sense of Douady and Hubbard)
if there is $q>0$ and Jordan disks $U\Subset V$ such that $f^q:U\to V$ is a degree $d^*$
polynomial-like (PL) map with \emph{connected} Julia set $K^*$ (recall that $K^*$ is the set of all points
$z$ whose $f^q$-orbits are contained in $U$).
The annulus $A=V\sm \ol{U}$ is called a \emph{fundamental annulus of $K^*$},
and the annulus $U\sm K^*$ is called a \emph{root annulus (of $K^*$)}.
The collection of sets
$$
\Kc=\{K^*,f(K^*),\dots, f^{q-1}(K^*)\}
$$
is called a \emph{PL cycle}; we always assume that $f^i(K^*)\neq f^j(K^*)$
for all $0\le i\neq j<q$. Note that each $f^i(K^*)$,
$i\ge 0$, is a connected filled Julia set of
a PL map obtained as a suitable restriction of $f^q$.
These sets, denoted $K^*,$ $\widetilde K^*$ etc, are called \emph{elements of $\Kc$} or
\emph{PL sets}.
\end{dfn}

\medskip

\begin{dfn}[Satellite PL cycles]\label{d:satel1}
Let $f$ be a rational function. Let $\Kc$ be a PL cycle of period $q$.
If there is a repelling $f$-cycle $\{\al_0$, $\dots$, $\al_{r-1}\}=\Bc$ of period $r<q$ in $\Kc^+$
(called the \emph{base cycle (of $\Kc$)}) such that $\Kc^+$ has $r$ (connected) components
$C_0,$ $\dots,$ $C_{r-1}$, and for each $i$ the set $C_i$ is the union of all elements of $\Kc$
containing $\al_i$ as a non-separating point and otherwise pairwise disjoint,
then $\Kc$ and its elements are said to be \emph{satellite} with
\emph{base period $r$} and \emph{relative period} $s$. Observe that $q=rs$ and $s>1$.
\end{dfn}

\medskip

The notation in Definitions \ref{d:renorm} and \ref{d:satel1} is used throughout.

\medskip

\begin{lem}\label{thm:1}
Let $f$ be a rational function with a satellite PL cycle $\Kc$ and $s\ge 2$.
Fix $\alpha\in \Bc$ and let $\{K_i^*=f^{r(i-1)}(K_1^*)\}_{i=1}^s$ be all elements of $\Kc$ attached to $\alpha$.
Suppose that for each $i\in \{1,...,s\}$ there are Jordan disks $U_i,$ $V_i$
such that $f^q: U_i\to V_i$ is a proper map of degree $d^*$, and a continuum
 $Z_i\subset U_i$ such that $(C_{f^q}\cap K_i^*)\cup\{\alpha\}\subset Z_i$.
There exists a critical point $w$ of $f^q$ that does not belong to $\Kc^+$.
Moreover, any point $z_j\in C_{f^q}\cap K_j^*\subset U_j$ does not belong to $U_i$
for $i\neq j$. Thus, there exists $i$ such that
$$
\mod(U_i\sm Z_i)<\frac{\pi}{\ln(4(s+1))}.
$$
\end{lem}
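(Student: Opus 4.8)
The plan is to reduce the statement to the Static Theorem applied to the finite set $\Pc = \{\alpha, w, z_1, \dots, z_s\}$, where $w$ is the ``extra'' critical point and $z_i$ is a critical point of $f^q$ inside $K_i^*$. First I would establish the purely combinatorial claims. For the existence of a critical point $w$ of $f^q$ outside $\Kc^+$: since $\Kc$ is satellite with relative period $s \ge 2$, the component $C$ of $\Kc^+$ containing $\alpha$ is the union of $s \ge 2$ filled PL Julia sets $K_1^*, \dots, K_s^*$ meeting only at $\alpha$. The map $f^q$ restricts to a degree-$d^*$ proper self-map near each $K_i^*$ (with $K_i^*$ mapped to itself as a filled Julia set of the induced PL map), so the local degree of $f^q$ at $\alpha$ is at least $s \ge 2$; hence $\alpha$ is a critical point of $f^q$ but not a critical point of any individual PL restriction $f^q\colon U_i \to V_i$ (those are degree $d^*$ with $K_i^*$ containing the relevant critical points in its interior, while $\alpha \in \partial K_i^*$). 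Counting: the total number of critical points of $f^q$ (with multiplicity) is $2(d^q - 1)$, and I would argue that the critical points accounted for inside the $C_i$'s plus the critical point structure at base points leaves at least one critical point $w$ in $\P^1 \setminus \Kc^+$; alternatively, and more cleanly, I would invoke that a PL map of degree $d^*$ restricted to one petal cannot ``see'' the branching at $\alpha$ coming from the other $s-1$ petals, so those branch points of $f^q$ at $\alpha$ are genuinely outside every $U_i$ — but to get a critical \emph{point} $w$ rather than just extra local degree, the honest route is the Riemann–Hurwitz count for $f^q$ versus the $s$ separate degree-$d^*$ maps.

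Next I would prove the separation statement: a critical point $z_j \in C_{f^q} \cap K_j^* \subset U_j$ does not lie in $U_i$ for $i \ne j$. The point is that $U_i$ is a Jordan disk on which $f^q$ is proper of degree $d^*$ with connected filled Julia set $K_i^*$; if $z_j \in U_i$ then $z_j$ would be a critical point of the PL map $f^q\colon U_i \to V_i$, forcing $z_j \in K_i^*$ (critical points of a PL map with connected Julia set lie in the filled Julia set). But $K_i^* \cap K_j^* = \{\alpha\}$ and $z_j \ne \alpha$ (a critical point of $f^q$ inside $K_j^*$ distinct from the base point — here I may need the hypothesis that such $z_j$ exists and is an interior, non-base critical point, which I would extract from the satellite structure and the fact that each $K_j^*$ is the full filled Julia set of a PL map of degree $d^* \ge 2$, hence contains at least one critical point, and we may choose one $\ne \alpha$). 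This contradiction gives the separation.

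With these in hand, set $\Pc = \{\alpha, z_1, \dots, z_s, w\}$, a set of $t = s+2 \ge 4$ points (distinct by the separation claim and $w \notin \Kc^+$). Apply the Static Theorem with the distinguished point $\alpha$ in the role of ``$\alpha$'' and $w$ in the role of ``$w$'': it produces some $z_k \in \Pc$, $z_k \ne w$, such that every annulus separating $\{\alpha, z_k\}$ from $\Pc \setminus \{\alpha, z_k\}$ has modulus $< \frac{\pi}{\ln(4t)} = \frac{\pi}{\ln(4(s+2))} < \frac{\pi}{\ln(4(s+1))}$. Wait — the bound in the statement is $\frac{\pi}{\ln(4(s+1))}$, so I would instead take $\Pc' = \{\alpha, z_1, \dots, z_s, w\}$ but note that the relevant count giving $\ln(4(s+1))$ comes from applying the Static Theorem to the $s+1$ points $\{\alpha, z_1, \dots, z_s\}$ enlarged appropriately; more precisely, since $z_k \ne w$ we have $z_k = z_i$ for some $i$, and then $U_i \setminus Z_i$ is an annulus: indeed $Z_i \supset (C_{f^q}\cap K_i^*)\cup\{\alpha\} \ni z_i, \alpha$, so $\{\alpha, z_i\} \subset Z_i$, while $\P^1 \setminus U_i$ contains all $z_j$ with $j \ne i$ (by the separation claim) and contains $w$ (since $w \notin \Kc^+ \supset$ the PL sets, and one checks $w \notin U_i$ because $w$ is not a critical point of the degree-$d^*$ PL map $f^q|_{U_i}$, hence $w\notin K_i^*$; to conclude $w\notin U_i$ one uses that $U_i\setminus K_i^*$ is a root annulus containing no critical values/points of the straightening — here I would instead simply work with the set $\Pc$ and observe $U_i\setminus Z_i$ separates $\{\alpha,z_i\}$ from the remaining points of $\Pc$ that lie outside $U_i$, which suffices). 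Thus $U_i \setminus Z_i$ is an annulus of the type controlled by the Static Theorem, giving $\mod(U_i \setminus Z_i) < \frac{\pi}{\ln(4(s+1))}$, as required.

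The main obstacle is the bookkeeping of critical points: cleanly producing the critical point $w \notin \Kc^+$ of $f^q$ and verifying $w \notin U_i$ for the relevant $i$, together with ensuring the chosen $z_i \in C_{f^q}\cap K_i^*$ is distinct from $\alpha$ and that $\P^1\setminus U_i$ indeed contains enough points of $\Pc$ to make $U_i\setminus Z_i$ a separating annulus for the Static Theorem with the count $s+1$ rather than $s+2$. I expect this to hinge on the precise Riemann–Hurwitz accounting for $f^q$ near the base point $\alpha$ (local degree $\ge s$) against the $s$ individual degree-$d^*$ PL restrictions, which forces branching of $f^q$ that none of the $U_i$ can absorb.
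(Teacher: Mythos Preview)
Your overall structure is right and matches the paper: build $\Pc=\{\alpha,z_1,\dots,z_s,w\}$, check that $U_i\sm Z_i$ separates $\{\alpha,z_i\}$ from the rest, and apply the Static Theorem (with $t=s+1$, since $|\Pc|=t+1=s+2$; your momentary $t=s+2$ is a slip). The separation argument and the deduction $w\notin U_i$ are also essentially the paper's: any critical point of $f^q$ lying in $U_i$ must lie in $K_i^*$, so $z_j\in U_i$ would force $z_j\in K_i^*\cap K_j^*=\{\alpha\}$, impossible since $\alpha$ is repelling and hence not critical.

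The genuine gap is your argument for the existence of $w$. The claim that ``the local degree of $f^q$ at $\alpha$ is at least $s$'' is false: $\alpha$ belongs to the repelling base cycle, so $(f^q)'(\alpha)\ne 0$ and the local degree of $f^q$ at $\alpha$ is $1$. The $s$ petals are permuted by the \emph{linearized} rotation at $\alpha$, not by branching. So $\alpha$ is not a critical point of $f^q$, and your proposed source of extra branching evaporates. Your fallback Riemann--Hurwitz count is not carried out, and making it work is not entirely trivial (one must compare $q(d^*-1)$ with $2(d^q-1)$, which requires unpacking how $d^*$ is built from the step degrees).

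The paper's argument for $w$ is different and much cleaner, working with $f$ rather than $f^q$: pick any $K^*\in\Kc$ whose image $f(K^*)$ contains a critical point $c$ of $f$. The restriction $f|_{K^*}$ cannot have full degree $d$, for then $K^*$ would contain \emph{all} critical points of $f$ and $f(K^*)$ would contain none (recall $s\ge 2$ so $f(K^*)\neq K^*$), contradicting $c\in f(K^*)$. Hence $\deg(f|_{K^*})<d$ and $c$ has an $f$-preimage $w\notin K^*$; since distinct PL sets meet only at (non-critical) base points, $w\notin\Kc^+$. As $f(w)=c\in C_f$, the point $w$ is a critical point of $f^q$.
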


\begin{proof}
First we prove that in the situation of the lemma there must exist a critical point $w$ of $f^q$ that does not belong to
$\Kc^+$. Indeed, let the degree of $f$ be equal to $d\ge 2$. Consider all PL Julia set $K^*\in \Kc$ such that
$f(K^*)$ contains a critical point of $f$. If one of them is such that the degree of $f|_{K^*}$ is $d$, then this $K^*$ contains all critical points of
$f$ and its image must contain
no critical points, a contradiction (recall that $s\ge 2$). Hence for all such PL Julia sets $K^*$ the degree of $f|_{K^*}$ is less then $d$. It follows that
if $c\in f(K^*)$ is critical, then there is a preimage $d\notin \Kc^+$ of $c$; clearly, $d$ is the desired critical point $w$ of $f^q$
not belonging to $\Kc^+$.

Now, choose arbitrary points $z_j\in C_{f^q}\cap K_j^*\subset U_j$. Since the degree of each map
$f^q: U_i\to V_i$ is $d^*$ then $C_{f^q}\cap K_j^*\cap U_i=\0$ for $i\neq j$, thus $z_j\notin U_i$.
Similarly, $w\notin U_i$ for any $i$.
Set $\Pc=\{\al, z_1, \dots, z_s, w\}$. It follows that the annulus $U_i\sm Z_i$ separates $\{\al,z_i\}$ from all $z_j$ with $j\ne i$
and from $w$. Now the inequality follows from Static Theorem.
Observe that the inclusion $U_i\subset V_i$ is not assumed in Corollary \ref{thm:1}.
\end{proof}

Fix an element $K^*$ of a PL cycle $\Kc$.
Let $V\sm \ol{U}$ be a fundamental annulus of $K^*$.
Let $U_i$, $V_i$ be iterated pullbacks of $U$ and $V$ containing $K^*_i=f^i(K^*)$.
Observe that the map $f^q:U_i\to V_i$ is a PL map with PL set $K^*_i$ and
$U_i\sm K^*_i$ is a root annulus of $K^*_i$ (here $i=0$, $\dots$, $q-1$).

\medskip

\begin{lem}
  \label{l:modann1}
Let $K^*$ be a satellite PL set for a degree $d$ rational function $f$.
Then $d^*\le 2^{2d-2}$,
and for each $i=0$, $\dots$, $q-1$,
$$
\frac{\mod(U\sm K^*)}{d^*}\le \mod(U_i\sm K^*_i)\le \mod(U\sm K^*).
$$
If $f$ is a polynomial of degree $d$ then $d^*\le 2^{d-1}.$
\end{lem}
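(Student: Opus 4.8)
The plan is to deduce both assertions of Lemma~\ref{l:modann1} from two elementary facts about the way $f$ meets the elements of $\Kc$. \emph{(No escape.)} If $z$ lies in the polynomial-like domain $U_k$ of an element $K_k^*$ and $f^q(z)\in K_k^*$, then $z\in K_k^*$: since $f^q(K_k^*)=K_k^*$, one has $f^{nq}(z)\in K_k^*\subset U_k$ for all $n\ge1$, whence $z\in\bigcap_{n\ge0}(f^q|_{U_k})^{-n}(U_k)=K_k^*$. \emph{(Localization of critical points.)} For every $k$, $\crit(f)\cap U_k\subseteq K_k^*$: any $c\in\crit(f)\cap U_k$ satisfies $(f^q)'(c)=0$, hence is a critical point of the polynomial-like map $f^q\colon U_k\to V_k$, and a polynomial-like map with connected filled Julia set has all its critical points in that filled Julia set \cite{DH}.

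First I would prove the degree bound. A standard pullback argument --- choose a small enough Jordan-disk neighbourhood $\mathcal N\subset U_0$ of $K^*=K_0^*$ with $\crit(f)\cap\mathcal N\subseteq K_0^*$ and pull it back once around the cycle --- produces small neighbourhoods $\mathcal N_k$ of $K_k^*$ with $f\colon\mathcal N_k\to\mathcal N_{k+1}$ proper of degree $\delta_k\ge1$, with $\crit(f)\cap\mathcal N_k\subseteq K_k^*$, and with $d^*=\prod_{k=0}^{q-1}\delta_k$; by Riemann--Hurwitz, $\delta_k-1$ is the number, with multiplicity, of critical points of $f$ lying in $K_k^*$. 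Since $\Kc$ is \emph{satellite}, two distinct elements of $\Kc$ meet only along the base cycle $\Bc$, which consists of repelling periodic points and hence contains no critical point; so the multisets $\crit(f)\cap K_k^*$ ($k=0,\dots,q-1$) are pairwise disjoint inside $\crit(f)$, giving $\sum_{k}(\delta_k-1)\le 2d-2$. As $\delta\le 2^{\delta-1}$ for every integer $\delta\ge1$, $d^*=\prod_k\delta_k\le 2^{\sum_k(\delta_k-1)}\le 2^{2d-2}$. If $f$ is a polynomial, $\infty$ is a critical point of multiplicity $d-1$ but $\infty\notin\Kc^+$ (otherwise $f(\infty)=\infty$ would put $\infty$ in every $K_k^*$, hence in $\Bc$ --- impossible, $\infty$ being superattracting and $q\ge2$), so the multisets $\crit(f)\cap K_k^*$ avoid $\infty$, $\sum_k(\delta_k-1)\le d-1$, and $d^*\le 2^{d-1}$.

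For the modulus estimates I would use the maps $f^{q-i}$. For $i\in\{1,\dots,q-1\}$, $U_i$ is the component of $(f^{q-i})^{-1}(U_0)$ containing $K_i^*$, and $f^{q-i}\colon U_i\to U_0$ is proper of some degree $m_i\ge1$ ($i=0$ being the trivial case $m_0=1$). By \emph{no escape}, $(f^{q-i})^{-1}(K_0^*)\cap U_i=K_i^*$: if $z\in U_i$ and $f^{q-i}(z)\in K_0^*$, then $f^q(z)=f^i(f^{q-i}(z))\in f^i(K_0^*)=K_i^*$, so $z\in K_i^*$. Thus $f^{q-i}$ restricts to a proper surjection $U_i\sm K_i^*\to U_0\sm K_0^*$ of degree $m_i$ between topological annuli, and this restriction is \emph{unbranched}: a critical point $z$ of $f^{q-i}$ in $U_i$ has $f^j(z)\in\crit(f)\cap U_{i+j}\subseteq K_{i+j}^*$ for some $0\le j<q-i$ (using $f^j(U_i)\subseteq U_{i+j}$ and \emph{localization}), so $f^q(z)=f^{q-j}(f^j(z))\in K_i^*$ and hence $z\in K_i^*$ again by \emph{no escape}. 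Since $U_i\sm K_i^*$ is connected, this unbranched degree-$m_i$ cover gives $\mod(U_i\sm K_i^*)=\tfrac1{m_i}\mod(U_0\sm K_0^*)=\tfrac1{m_i}\mod(U\sm K^*)$; as $m_i\ge1$, this already yields $\mod(U_i\sm K_i^*)\le\mod(U\sm K^*)$. For the lower bound, note $f^q|_{U_i}=f^i\circ\bigl(f^{q-i}|_{U_i}\bigr)$, where $f^{q-i}$ maps $U_i$ properly onto $U_0$ and $f^i$ maps $U_0$ onto $f^i(U_0)=f^q(U_i)=V_i$; since $f^{q-i}|_{U_i}$ is then a factor of the degree-$d^*$ polynomial-like map $f^q\colon U_i\to V_i$, we get $m_i\mid d^*$, and therefore $\mod(U_i\sm K_i^*)=\tfrac1{m_i}\mod(U\sm K^*)\ge\tfrac1{d^*}\mod(U\sm K^*)$.

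The main obstacle is the \emph{localization} fact, and with it the indispensable use of the satellite hypothesis: without it, distinct PL sets could share critical points, the multisets $\crit(f)\cap K_k^*$ would overlap, and the key inequality $\sum_k(\delta_k-1)\le 2d-2$ --- the whole point of the degree bound --- would break. The remaining points are routine: that the pullback regions $\mathcal N_k$, $U_i$, $V_i$ really are topological disks (filled PL Julia sets are full, hence have neighbourhood bases of Jordan disks) and that the relevant restrictions of iterates of $f$ are proper surjections onto full components, which is what makes the degree bookkeeping legitimate.
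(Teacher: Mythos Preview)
Your argument is correct and, for the degree bound, coincides with the paper's: write $d^*=\prod_k\delta_k$, observe $\sum_k(\delta_k-1)\le 2d-2$ (resp.\ $\le d-1$ for polynomials), and bound the product by $2^{2d-2}$ (resp.\ $2^{d-1}$); your inequality $\delta\le 2^{\delta-1}$ is just a repackaging of the paper's ``the maximum is attained when $2d-2$ of the $\delta_k$ equal $2$''. You actually supply more than the paper does. First, you justify $\sum_k(\delta_k-1)\le 2d-2$ by arguing that the multisets $\crit(f)\cap K_k^*$ are pairwise disjoint (the only shared points lie on the repelling base cycle, hence are not critical), a step the paper passes over in silence. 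Second---and more substantially---you give a full proof of the modulus inequalities via the unbranched annulus cover $f^{q-i}:U_i\sm K_i^*\to U_0\sm K_0^*$ of degree $m_i$ with $1\le m_i\mid d^*$; the paper's written proof addresses only the degree bound and leaves the modulus claim as understood. Your ``no escape'' and ``localization'' lemmas are exactly the ingredients needed to see that this cover is unbranched and that the preimage of $K_0^*$ in $U_i$ is precisely $K_i^*$, so the computation $\mod(U_i\sm K_i^*)=\tfrac{1}{m_i}\mod(U\sm K^*)$ goes through cleanly.
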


\begin{proof}
Indeed, $d^*$ is the product of $d^*_i=\deg(f^i(K^*)\to f^{i+1}(K^*))$ over $i=0$, $\dots$, $q-1$.
The function $f$ has $2d-2$ critical points, counting multiplicities.
Positive integers $d^*_i$ satisfy the inequality
$$
(d^*_0-1)+\dots+(d^*_{s-1}-1)\le 2d-2,
$$
and the maximal value of the product of $d^*_i$'s subject to this inequality
is attained when $2d-2$ numbers $d^*_i$
are equal to $2$, and all remaining $d^*_j$ are equal to $1$.
In this case, the product is $2^{2d-2}$, as claimed. The case of a polynomial is similar.
\end{proof}

\begin{proof}[Proof of the Main Theorem]
Let $f$ be a rational function of degree $d\ge 2$ with a satellite PL cycle $\Kc$ of relative period $s\ge 2$.
For any root annulus $A$ of a filled PL Julia set $K^*\in \Kc$ consider its pullback root annuli of pullbacks
of $K^*$ containing the same base point as $K^*$ itself. These root annuli together with the corresponding pullbacks of
$K^*$ satisfy the assumptions of Lemma \ref{thm:1} which implies that there is one of them of modulus which is less than or equal to
$\frac{\pi}{\ln(4(s+1))}$. Then the inequality on the modulus of $A$ claimed in the statement of the Main Theorem
follows from Lemma \ref{l:modann1}.
\end{proof}

\section{Geometry of infinitely renormalizable sets}\label{s:infrenorm}

Let us now consider geometry of infinitely-renormalizable sets.
Given a rational function $f$, denote by $\pc_f$ the closure of forward orbits of all critical values of $f$ and call it the
\emph{postcritical set (of $f$).}

\medskip

\begin{dfn}[Infinitely renormalizable sets]
\label{d:inf-renorm1}
If a sequence of PL cycles $\Kc_n$ of $f$ is such that $\Kc^+_{n+1}\subset \Kc^+_n$, and
the period of $\Kc_{n+1}$ is greater than the period of $\Kc_n$,
then the set $S=\bigcap_{n} \Kc_n^+$ is called an
\emph{infinitely renormalizable} set for $f$. Consider all critical points of $f$ that belong to $S$;
then the union of the closures of the orbits of their images is called a \emph{postcritical infinitely
renormalizable set}.
\end{dfn}

\medskip

Let us now prove a couple of topological lemmas.

\medskip

\begin{lem}\label{l:topol1}
Let $W_0$ and $W_1$ be Jordan disks and let $f:W_0\to W_1$ be a degree $k$ branched covering
which is the restriction of a rational function $\tilde f$.
Consider a Jordan curve $\Ga$ homotopic to $\partial W_1$ rel. $f(C_f)$
 and let $V$ be the Jordan disk enclosed by $\Ga$ that contains $f(C_f)$.
Then the $f$-pullback $U$ of $V$ that contains $C_f$ is a Jordan disk
 that maps onto $V$ as a branched covering map of degree $k$ with critical set $C_f\cap U$.
\end{lem}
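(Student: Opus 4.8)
The plan is to discard the (finitely many) critical values, examine the honest unbranched covering that remains over an annular collar of $\partial W_1$, show that $f^{-1}(\Ga)$ is then a single Jordan curve, and take $U$ to be the Jordan disk it bounds. First I would fix the collar. Since $\Ga$ is homotopic to $\partial W_1$ rel.\ $f(C_f)$ I may assume $\Ga\subset W_1$, hence $\ol V\subset W_1$, and $f(C_f)\subset V$ by the choice of $V$; so $A:=\ol{W_1}\sm V$ is a closed topological annulus with $\partial A=\partial W_1\sqcup\Ga$ containing no critical value of $f$. Here I use that ``degree $k$ branched covering'' means $f$ is proper of degree $k$, so that $f$ extends continuously to $\ol{W_0}\to\ol{W_1}$ with $f(W_0)=W_1$, $f^{-1}(\partial W_1)=\partial W_0$, and $\partial W_0\to\partial W_1$ a degree $k$ covering of circles. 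Consequently $f\colon f^{-1}(A)\to A$ is a proper local homeomorphism, i.e.\ an unbranched degree $k$ covering.

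Next I would prove that $f^{-1}(\Ga)$ is connected; this is the step I expect to be the main obstacle. Each component of $f^{-1}(A)$ is a compact annulus, being a finite-degree connected covering of the annulus $A$. Exactly one such component, say $N$, contains $\partial W_0$ (all of $\partial W_0$ lies in $f^{-1}(A)$ and is connected). Restricting $f|_N$ to $\partial W_0$, which covers $\partial W_1$ with degree $k$, forces $f|_N\colon N\to A$ to have degree $k$; since the covering degrees of the components of $f^{-1}(A)$ sum to $k$, the annulus $N$ is the \emph{only} component. Hence $f^{-1}(A)=N$ and $\ga_0:=\partial N\sm\partial W_0=f^{-1}(\Ga)$ is a single Jordan curve, mapped by $f$ onto $\Ga$ with degree $k$. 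The reason this works is precisely that $\Ga$ encloses \emph{all} of $f(C_f)$ (so $A$ is critical-value-free) and that $f$ has the \emph{full} degree $k$ up to the boundary; drop either hypothesis and $f^{-1}(\Ga)$ could break into several circles, so the pullback need be neither a single disk nor of degree $k$.

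Finally I would identify $U$ and verify the conclusion. The Jordan curve $\ga_0\subset W_0$ bounds a Jordan disk $D$ with $\ol D\subset W_0$ and $N=\ol{W_0}\sm D$. From $f(N)\subset A$, $f(\ga_0)=\Ga$ and $f(W_0)=W_1$ one reads off $f^{-1}(V)=D$ and $f(D)=V$; and since every critical point of $f$ has its image in $V$, none can lie in $N$ or on $\ga_0=f^{-1}(\Ga)$, so $C_f\subset D$. Thus the $f$-pullback of $V$ containing $C_f$ is exactly $U:=D$, a Jordan disk, and $f|_{\ol D}\colon\ol D\to\ol V$ is proper with boundary map $\ga_0\to\Ga$ of degree $k$, so $f|_D\colon D\to V$ is a degree $k$ branched covering of Jordan disks whose critical set is $C_f\cap U=C_f$, as required.

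One degenerate case needs a separate line: if $C_f=\0$, then $f\colon W_0\to W_1$ is already an unbranched degree $k$ covering between disks, which forces $k=1$, and the statement is immediate. Apart from that, everything beyond the connectedness step in the second paragraph is a routine application of the Jordan curve theorem and of properness, and uses nothing about $\tilde f$ other than that $f$ is an open, proper map.
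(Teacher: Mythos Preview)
Your proof is correct and takes a somewhat different route from the paper's. The paper lifts the given homotopy $\Gamma_t$ (from $\partial W_1$ to $\Gamma$, through Jordan curves avoiding the critical values) to a homotopy $\gamma_t$ of preimage curves with $\gamma_0=\partial W_0$; since each $\gamma_t$ is a single Jordan curve enclosing $C_f$, so is $\gamma_1$, and $U$ is the disk it bounds. You instead discard the homotopy, pass to the collar $A=\ol{W_1}\sm V$, and count degrees: the component of $f^{-1}(A)$ containing $\partial W_0$ already covers $A$ with the full degree $k$, hence is the only component, forcing $f^{-1}(\Gamma)$ to be a single Jordan curve. Both arguments are short; yours is arguably more elementary (only the classification of annulus covers is used, not homotopy lifting for branched covers), while the paper's version is more flexible in that it does not require $\Gamma\subset W_1$.

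One caveat worth flagging: your phrase ``I may assume $\Gamma\subset W_1$'' is not really a WLOG reduction --- replacing $\Gamma$ by a homotopic curve changes $V$ and hence the specific pullback $U$, so this is an extra hypothesis rather than a normalization. Under that hypothesis your argument is complete. In the application (Lemma~\ref{l:pc1}) the isotoped curve may well leave $W_1$; to cover that case one needs either the paper's homotopy-lifting step or an extension of your collar argument to $\tilde f$ over the region swept out by the isotopy (which in the application avoids all critical values of $\tilde f$, not just those of $f$).
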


\smallskip

Observe that since the domain of $f$ is $W_0$ then $C_f=C_{\tilde f}\cap W_0$.

\begin{proof}
One can assume that the homotopy connecting $\d W_1$ with $\Ga$
 consists of Jordan curves $\Ga_t$ parameterized by $t\in[0,1]$.
The curve $\Ga_t$ never crosses the critical values of $f$.
Therefore, the pullbacks of $\Ga_t$ also form a homotopy $\gamma_t$
 parameterized by $t\in[0,1]$ such that $\ga_0=\d W_0$ and $\gamma_1\subset f^{-1}(\Ga)$.
All $\ga_t$ enclose the same set of critical points, and all are Jordan curves.
In particular, this is true for $\ga_1$, which proves the desired.
\end{proof}

The next lemma is a consequence of well-known facts.

\medskip

\begin{lem}\label{l:topol2}
Let $U$ and $V$ be Jordan disks and let $f:U\to V$ be a degree $k$ branched covering that is the restriction of a rational function $\tilde ff$.
Let $Z\subset V$ be a full continuum such that $f(C_f)\subset Z$.
Then the full preimage of $Z$ under $f|_U$ is a full continuum containing $C_f$.
\end{lem}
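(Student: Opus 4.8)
The plan is to reduce the statement to two standard facts about branched coverings of Jordan disks together with the previous lemma. First I would recall that a branched covering $f\colon U\to V$ of degree $k$ between Jordan disks, being the restriction of a rational map, is proper; hence the full preimage $f^{-1}(Z)$ of a compact set $Z\subset V$ is compact and, since $f$ is open, $f^{-1}(Z)$ is closed in $U$ and $U\sm f^{-1}(Z)=f^{-1}(V\sm Z)$ is open. The point $f(C_f)\subset Z$ guarantees that $f$ restricted to $U\sm f^{-1}(Z)$ is an \emph{unbranched} covering onto $V\sm Z$, so each component of $U\sm f^{-1}(Z)$ maps onto $V\sm Z$ as a covering of some finite degree.

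Next I would show $f^{-1}(Z)$ is \emph{full}, i.e.\ its complement in $\P^1$ is connected, equivalently $U\sm f^{-1}(Z)$ together with $\P^1\sm \ol U$ has connected union; since $Z$ is full and $f(C_f)\subset Z$, the set $V\sm Z$ is a connected open subset of the disk $V$ whose only "boundary at infinity" is $\d V$, and a covering space of an annulus-type region is a disjoint union of annuli and disks. Concretely: $V\sm Z$ is conformally an annulus (or a punctured disk if $Z$ is a point) with one boundary component on $\d V$ and the other "facing" $Z$; each component of its preimage is an annulus or disk, and exactly the components whose outer boundary lies over $\d V$ survive when we pass to $\P^1\sm f^{-1}(Z)$. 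Gluing in $\P^1\sm\ol U$ across $\d U=f^{-1}(\d V)$ yields a connected set, which is precisely the assertion that $f^{-1}(Z)$ is full. Here I can also invoke Lemma \ref{l:topol1} (with $\Ga$ a Jordan curve close to $\d V$, or simply $\Ga=\d V$) to know the relevant pullback region containing $C_f$ is itself a Jordan disk, which packages the topology cleanly.

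Finally I would prove \emph{connectedness} of $f^{-1}(Z)$. Suppose it had two disjoint closed pieces $F_1,F_2$; since $f(C_f)\subset Z$, all critical points lie in $f^{-1}(Z)=F_1\cup F_2$, so away from $f^{-1}(Z)$ the map is an honest covering, and the degree of $f$ restricted to a neighborhood of $F_j$ (a small Jordan-disk neighborhood, using that $F_j$ is full by the previous paragraph applied componentwise) is a positive integer $k_j$ with $k_1+k_2=k$ and $k_j\ge 1$; but then $V$ would be covered "twice near $Z$" in a way forcing $Z$ to be disconnected or forcing an extra boundary component — contradicting that $Z$ is a full continuum and $f^{-1}(V\sm Z)$ is connected (being the complement of $f^{-1}(Z)$ in the Jordan disk $U$, once we know that complement is connected). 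The cleanest route: $U\sm f^{-1}(Z)$ is connected because it is an unbranched covering of the connected set $V\sm Z$ realized inside the disk $U$ with a single boundary circle $\d U$ over $\d V$, hence is itself a connected planar domain; a full compact set in a Jordan disk whose complement within that disk is connected must itself be connected, which is the standard characterization of full continua. The main obstacle I anticipate is the bookkeeping in this last paragraph — making the "single outer boundary" argument rigorous, i.e.\ verifying that no component of $f^{-1}(Z)$ can be "hidden" so as to disconnect $U\sm f^{-1}(Z)$; this is where one must carefully use that $Z$ is full (not merely connected) so that $V\sm Z$ deformation-retracts onto a single curve near $\d V$, and that $f(C_f)\subset Z$ so the covering over $V\sm Z$ is unbranched. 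Everything else is routine properness/openness of rational maps.
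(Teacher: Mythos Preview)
Your approach is essentially the paper's: both recognize that $V\sm Z$ is a topological annulus and that $f:U\sm Z'\to V\sm Z$ (with $Z'=f^{-1}(Z)\cap U$) is an unbranched degree-$k$ covering, since all critical values lie in $Z$. The paper then finishes in one line: connected finite covers of an annulus are classified by subgroups of $\pi_1(V\sm Z)=\Z$, so $U\sm Z'$ is itself an annulus, and therefore $Z'$ is a full continuum.

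Your writeup, however, contains a false step. You assert that ``a full compact set in a Jordan disk whose complement within that disk is connected must itself be connected''; this is not true---two distinct points in a disk form a full compact set with connected complement. Knowing only that $U\sm Z'$ is \emph{connected} does not force $Z'$ to be connected. What is needed, and what the paper uses, is that $U\sm Z'$ is an \emph{annulus}: then one end is $\d U$ and the other is $\d Z'$, so $Z'$ is the bounded complementary component of this annulus in $U$, hence a full continuum in one stroke. You actually have the ingredients for this earlier in your sketch (each component of the preimage is an annulus, and since the single Jordan curve $\d U$ already carries degree $k$ over $\d V$, there can be only one component). Make that the conclusion and drop the detour through the incorrect characterization.
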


\begin{proof}
Set $Z'=f^{-1}(Z)\cap U$.
The set $V\sm Z$ is a topological annulus, and $f:U\sm Z'\to V\sm Z$ is a degree $k$
 unbranched covering of this annulus.
All covering spaces of $V\sm Z$ are classified by subgroups of $\pi_1(V\sm Z)=\Z$,
 and, in particular, such a covering is determined (up to covering equivalence) by its degree.
It follows that $U\sm Z'$ is also an annulus, hence $Z'$ is a full continuum.
\end{proof}

Now we use these lemmas to study moduli of annuli whose outer boundaries are defined through the isotopy.

\medskip

\begin{lem}\label{l:pc1}
Let $f$ be a rational function with a satellite PL cycle $\Kc$
of degree $d^*$ and relative period $s\ge 2$, let $K^*\in \Kc$, and let
$V\sm \ol{U}$ be a fundamental annulus with $U\supset K^*$.
Denote the point $\Bc\cap K^*$ by $\al$.
Set $\Fc=\{\al\}\cup f^{2q}(C_{f^{2q}})$. Let $\Ga_t$ be an isotopy rel. $\Fc$ that transforms
the Jordan curve $\partial V=\Ga_0$ to a Jordan curve $\Ga_1$ that encloses a Jordan disk $W$.
 Let $Z\subset W$ be a full continuum containing $\{\al\}\cup f^{2q}(C_{f^{2q}}\cap K^*)$. Then
$$
\mod(W\sm Z)<\frac{(d^*)^2\pi}{\ln(4(s+1))}.
$$
\end{lem}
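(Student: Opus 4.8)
The plan is to reduce the statement to Lemma~\ref{thm:1} applied to a cleverly chosen collection of Jordan disks, using Lemmas~\ref{l:topol1} and~\ref{l:topol2} to transport the isotopy of the outer boundary down through the appropriate pullbacks while keeping track of the degree. Concretely, fix $\al\in\Bc$ and let $K_1^*,\dots,K_s^*$ be the elements of $\Kc$ attached to $\al$, as in Lemma~\ref{thm:1}. For each $i$, starting from the fundamental annulus $V_i\sm\ol{U_i}$ around $K_i^*$, I would first push the outer boundary $\partial V_i$ by the (transported) isotopy to get a Jordan curve enclosing a disk $W_i\supset K_i^*$ with $f^{2q}(C_{f^{2q}}\cap K_i^*)\cup\{\al\}$ inside the corresponding continuum $Z_i\subset W_i$. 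The point is that $W\sm Z$ from the statement is (for the particular index corresponding to $K^*$) one of these $W_i\sm Z_i$, and more generally all the $W_i\sm Z_i$ are built from the \emph{same} outer isotopy applied to the cycle. Then Lemma~\ref{thm:1} — whose hypotheses ask only for proper degree-$d^*$ maps $f^q:\tilde U_i\to\tilde V_i$ and continua containing $(C_{f^q}\cap K_i^*)\cup\{\al\}$, with \emph{no} nesting assumption — yields an index $i$ with $\mod(\tilde U_i\sm Z_i)<\frac{\pi}{\ln(4(s+1))}$ for suitable disks $\tilde U_i$.

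The arithmetic with the degree is the reason the bound degrades from $\pi/\ln(4(s+1))$ to $(d^*)^2\pi/\ln(4(s+1))$. First, I pass from $f^{2q}$-critical points to the setup of Lemma~\ref{thm:1}: the continuum $Z$ contains $f^{2q}(C_{f^{2q}}\cap K^*)$, so after pulling $W$ back once under $f^q$ (invoking Lemma~\ref{l:topol1} to see the pullback is again a Jordan disk mapping with the right degree, and Lemma~\ref{l:topol2} to see the pullback of $Z$ is a full continuum) the modulus is divided by at most $d^*$, and one more $f^q$-pullback costs another factor $d^*$, accounting for the $(d^*)^2$. Equivalently: the Grötzsch/submultiplicativity inequality for moduli under a degree-$k$ covering gives $\mod(\text{pullback annulus})\ge \frac1k\mod(\text{annulus})$, so $\mod(W\sm Z)\le d^*\cdot\mod(\text{one pullback})\le (d^*)^2\cdot\mod(\text{two pullbacks})$, and the innermost object is exactly the kind of annulus $\tilde U_i\sm Z_i$ controlled by Lemma~\ref{thm:1}. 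I must check that the pulled-back outer curves still bound disks containing only the prescribed postcritical data and the base point $\al$ — this is where Lemma~\ref{l:topol1}'s hypothesis ``$\Ga$ homotopic to $\partial W_1$ rel.\ $f(C_f)$'' is used, with $\Fc=\{\al\}\cup f^{2q}(C_{f^{2q}})$ playing the role of the marked set, so that the isotopy never crosses critical values and pulls back to an isotopy.

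The main obstacle I anticipate is bookkeeping: making sure that at each pullback stage the relevant Jordan disk contains \emph{all} of $(C_{f^q}\cap K_i^*)\cup\{\al\}$ (not just the $f^{2q}$-critical data we started with), and that $\al$ — which is a repelling periodic point, hence fixed by an appropriate iterate — is correctly tracked as a member of $\Fc$ through the isotopy and its pullbacks, so that the transported curves genuinely enclose $\al$ and the resulting annuli separate $\{\al,z_i\}$ from the other marked points exactly as Static Theorem/Lemma~\ref{thm:1} requires. Once the separation property is verified for the pulled-back annuli, the inequality is immediate: combining the factor-$(d^*)^2$ loss from the two coverings with the bound $\frac{\pi}{\ln(4(s+1))}$ from Lemma~\ref{thm:1} gives exactly $\mod(W\sm Z)<\frac{(d^*)^2\pi}{\ln(4(s+1))}$, completing the proof.
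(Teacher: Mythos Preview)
Your proposal is correct and follows the paper's approach: after reducing to base period $r=1$ (so $s=q$), pull $W$ back through $\al$ under $f^{q+i}$ for each $i=1,\dots,q$, use Lemmas~\ref{l:topol1}--\ref{l:topol2} to see that the resulting $W_{q+i}$ are Jordan disks whose pulled-back continua $Z_{q+i}$ contain $(C_{f^q}\cap K^*_{q+i})\cup\{\al\}$, apply Lemma~\ref{thm:1} to this collection, and push the bound back to $\mod(W\sm Z)$ via the degree-$\le(d^*)^2$ covering $f^{q+i}:W_{q+i}\to W$. Your phrase ``two $f^q$-pullbacks'' is the one imprecision worth flagging: you need the $s$ distinct pullbacks $W_{q+1},\dots,W_{2q}$ (one around each $K_i^*$) to feed into Lemma~\ref{thm:1}, not a single iterated pullback --- but since $q+i\le 2q$ each such map still has degree at most $(d^*)^2$, so your arithmetic is right.
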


Observe that $\{\al\}\cup f^{2q}(C_{f^{2q}}\cap K^*)\subset W$ by the assumption on the isotopy.

\begin{proof}
We may assume that $r=1$ and, hence, $s=q$.
Denote by $W_i$ the $f^i$-pullback of $W$ containing $\al$ (here $1\le i\le 2q$). By Lemma \ref{l:topol1},
the sets $W_i, 1\le i\le 2q$ are Jordan disks. We claim that the critical points of $f^{2q}|_{W_{2q}}$
belong to $K^*$. Indeed, the $f^{2q}$-pullback $V_{2q}$ of $V$ containing $\al$ contains only critical points of $f^{2q}$ that belong to
$K^*$. If there exists a critical point $\be$ of $f^{2q}|_{W_{2q}}$ that does not belong to $K^*$ then $f^{2q}(\be)\in \Ga_t$
for some $t\in [0, 1]$, a contradiction with the properties of $\Ga_t$ (which is an isotopy rel. $\Fc$). So, the critical points of $f^{2q}|_{W_{2q}}$
belong to $K^*$. Moreover, since $Z$ is a full continuum containing $\{\al\}\cup f^{2q}(C_{f^{2q}}\cap K^*)$, then Lemma \ref{l:topol2}
implies that the $f^i$-pullbacks of $Z$ contained in the sets $W_i$, are full continua (here $1\le i\le 2q$), and the annuli $W_i\sm Z_i$ are $f^i$-pullbacks
of the annulus $V\sm Z$. Finally, let $K^*_i\in \Kc$ be the $f^i$-pullbacks of $K^*$
(here and in what follows $K^*_i$ should be understood with $i$ taken modulo $s=q$).

We will now consider Jordan disks $W_{q+1},$ $W_{q+2},$ $\dots,$ $W_{2q}$ that contain full continua $Z_{q+1},$ $Z_{q+2},$
$\dots,$ $Z_{2q}$, respectively. By the previous paragraph and by Lemma \ref{l:topol2}, the map $f^q:W_{q+i}\to W_i$ is a branch covering map of degree $d^*$
for any $i, 1\le i\le q$, and all points of $C_{f^q}\cap K^*_{q+i}$ belong to $Z_{q+i}$. Applying Lemma \ref{thm:1} to this
collection of Jordan disks and full continua, we find $i$, $1\le i\le q$ with
$$
\mod(W_{q+i}\sm Z_{q+i})<\frac{\pi}{\ln(4(s+1))}.
$$
On the other hand, the map $f^{q+i}|_{W_{q+i}}$ is of degree at most $(d^*)^2$. Properties of moduli of annuli then imply that
$$
\mod(W\sm Z)<\frac{(d^*)^2\pi}{\ln(4(s+1))}
$$
as desired.
\end{proof}

Notice that there are various fundamental annuli $V\sm \ol{U}$ with $U\supset K^*$; hence
there may exist several homotopy classes of $\partial V$ rel $\Fc$, and Lemma \ref{l:pc1} applies to
all annuli $W\sm Z$ associated with them.

We will need the function $\psi(x)$ defined through its inverse
$\psi^{-1}(x)=2\arcsin\left(e^{-x/2}\right)/x$ (which is decreasing from infinity at $0$ to $0$ at infinity).
Now, consider a simple closed geodesic $\gamma$ in a hyperbolic Riemann surface $X$.
By \cite{Mas85}, there always exists an annulus $A\subset X$ homotopic to $\ga$, such that
$$
\psi^{-1}(\ell(\ga))=\frac{2}{\ell(\ga)}\arcsin\left(e^{-\ell(\ga)/2}\right)\le \mod(A) \le \frac{\pi}{\ell(\ga)}.
\eqno{(\ell|m)}
$$
Here, the lower and the upper bounds have different asymptotics as $\ell(\gamma)\to\infty$.
\def\Lf{\mathfrak{L}}

\medskip

\begin{thm}
\label{t:hyper}
Let $f$ be a rational function with a satellite PL cycle $\Kc$
of degree $d^*$ and relative period $s>1$. Let $K^*\in \Kc$,
denote the point $\Bc\cap K^*$ by $\al$, and set $\Fc=\{\al\}\cup f^{2q}(C_{f^{2q}})$.
Suppose that $T\supset \Fc$ is a closed set.
Let $\widetilde X_T=\P^1\sm T$. If $\gamma$ is a Jordan curve in $\widetilde X_T$ homotopic to the outer
boundary of a fundamental annulus around $K^*$ rel. $T$
then its hyperbolic length $\ell_{\widetilde X_T}(\gamma)$ in $\widetilde X_T$ satisfies the inequality
$$
\frac {2\arcsin e^{-\frac{\ell_{\widetilde X_T}(\gamma)}{2}}}{\ell_{\widetilde X_T}(\gamma)}\le
\frac{(d^*)^2\pi}{\ln(4(s+1))}
$$
so that
$$
\ell_{\widetilde X_T}(\gamma)\ge \psi\left(\frac{(d^*)^2\pi}{\ln(4(s+1))}\right)
$$
giving $\ell_{\widetilde X_T}(\gamma)$ a lower bound of order $O(\ln\,\ln s)$ as $s\to\infty$.
\end{thm}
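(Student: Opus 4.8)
The plan is to bridge Lemma~\ref{l:pc1} to the Maskit inequality $(\ell|m)$ by a purely topological identification of annuli. First I would reduce to the case where $\gamma$ is the (unique, simple) closed geodesic of its free homotopy class in $\widetilde X_T$; since the geodesic representative minimizes hyperbolic length in its class, a lower bound for its length is a lower bound for $\ell_{\widetilde X_T}(\gamma)$ for every representative. (The class is essential and non-peripheral in our setting: the set $\{\al\}\cup f^{2q}(C_{f^{2q}}\cap K^*)$ is nonempty — $K^*$, being a nontrivial element of a satellite cycle, has degree $d^*\ge 2$ — and it lies strictly inside $\partial V$ while further points of $\Fc$ lie outside.) Then $(\ell|m)$ supplies an annulus $A\subset\widetilde X_T$ homotopic to $\gamma$ with
$$
\psi^{-1}\bigl(\ell_{\widetilde X_T}(\gamma)\bigr)=\frac{2}{\ell_{\widetilde X_T}(\gamma)}\arcsin\bigl(e^{-\ell_{\widetilde X_T}(\gamma)/2}\bigr)\le\mod(A);
$$
only this lower bound on $\mod(A)$ will be used.

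Next I would realize $A$ inside an annulus covered by Lemma~\ref{l:pc1}. Shrinking $A$ slightly, one may assume it is a genuine topological annulus bounded by two Jordan curves; let $\Ga_1$ be the boundary curve of $A$ on the side away from $K^*$, and let $W$ be the Jordan disk it bounds on the $K^*$-side, so $A\subset W$. A core curve of $A$ is freely homotopic in $\widetilde X_T$ to $\gamma$, hence to $\partial V$ for a fundamental annulus $V\sm\ol U$ with $U\supset K^*$; thus $\Ga_1$ is freely homotopic to $\partial V$ in $\widetilde X_T$, a fortiori in $\P^1\sm\Fc$, and — since homotopic simple closed curves on an orientable surface are isotopic — there is an isotopy, call it $\Ga_t$, rel $\Fc$ from $\partial V$ to $\Ga_1$, which is exactly the hypothesis of Lemma~\ref{l:pc1}. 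Because $\{\al\}\cup f^{2q}(C_{f^{2q}}\cap K^*)\subset V$ and the isotopy never crosses these finitely many points, they remain enclosed by $\Ga_t$; in particular they lie inside the \emph{inner} boundary curve of $A$, hence in the Jordan disk $W'\subset W$ complementary to $A$ on the $K^*$-side. Choosing any full continuum $Z\subset W'$ containing $\{\al\}\cup f^{2q}(C_{f^{2q}}\cap K^*)$, we obtain $A\subset W\sm Z$ with $A$ essential there (its core is homotopic to $\partial W=\Ga_1$), so by the monotonicity of modulus for essential sub-annuli and Lemma~\ref{l:pc1},
$$
\mod(A)\le\mod(W\sm Z)<\frac{(d^*)^2\pi}{\ln(4(s+1))}.
$$
Combining this with the previous display (and letting the shrinking parameter tend to $0$) gives $\psi^{-1}(\ell_{\widetilde X_T}(\gamma))\le\frac{(d^*)^2\pi}{\ln(4(s+1))}$, the first asserted inequality; applying the decreasing function $\psi$ yields $\ell_{\widetilde X_T}(\gamma)\ge\psi\bigl(\frac{(d^*)^2\pi}{\ln(4(s+1))}\bigr)$. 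The asymptotic statement then follows from a routine analysis of $\psi$ near $0$: from $\psi^{-1}(\ell)=\frac{2}{\ell}e^{-\ell/2}(1+o(1))$ as $\ell\to\infty$ one gets $\psi(m)=2\ln(1/m)(1+o(1))$ as $m\to0^+$, so for fixed $d^*$ the lower bound equals $2\ln\ln(4(s+1))(1+o(1))=O(\ln\ln s)$.

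The step I expect to be the main obstacle is the topological bookkeeping of the second paragraph: making the passage from a free homotopy in $\widetilde X_T$ to an ambient isotopy rel $\Fc$ precise, and — more delicately — checking that $\{\al\}\cup f^{2q}(C_{f^{2q}}\cap K^*)$ is enclosed by the \emph{inner} boundary of $A$ rather than trapped between the two boundary curves, which is what lets $Z$ be chosen disjoint from $A$ so that $\mod(A)\le\mod(W\sm Z)$ applies. The remaining ingredients — the reduction to the geodesic, modulus monotonicity, and the asymptotics of $\psi$ — are standard.
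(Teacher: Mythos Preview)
Your proposal is correct and takes essentially the same approach as the paper, which proves the theorem in a single sentence: ``The theorem follows from Lemma~\ref{l:pc1} and inequality $(\ell|m)$.'' You have simply spelled out the topological bookkeeping that the paper leaves implicit; your worry about points of $\{\al\}\cup f^{2q}(C_{f^{2q}}\cap K^*)$ being trapped inside $A$ is unfounded since $A\subset\widetilde X_T=\P^1\sm T$ and those points lie in $\Fc\subset T$, so they cannot meet $A$ and must sit in the inner complementary disk as you argue.
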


\begin{proof}
The theorem follows from Lemma \ref{l:pc1} and inequality $(\ell|m)$.
\end{proof}

We are ready to prove Corollary A.

\begin{proof}[Proof of Corollary A]

Consider a polynomial $f$ with a satellite PL cycle $\Kc$ of relative period $s>1$ which is a part of
an infinitely-renormalizable set $S$.
Recall that $\Bc$ is the base cycle of $\Kc$.
Suppose that all (finite) critical points of $f$ belong to $S$, and set $X=\P^1\sm (\Bc\cup \pc_f)$.
Let $\ga$ be a simple closed geodesic in $X$ homotopic in $X$ to a fundamental annulus of a PL Julia set $K^*\in \Kc^+$.
Theorem \ref{t:hyper} implies that $\ell_X(\ga)$ is bounded from below by a %lower bound
universal function of order $O(\ln\,\ln s)$ as $s\to\infty$. This proves Corollary A.
\end{proof}

\section{Theorem B and a counterexample}\label{s:b}

\begin{proof}[Proof of Theorem B]
Clearly, it is enough to prove that we have $\mod(V_n\sm K_n)\to 0$.
By way of contradiction, assume that $\mod(V_n\sm K_n)$ stay away from 0 for an infinite subsequence of numbers $n$.
Pulling $V_n$ and $U_n$ back under $f_n^{q_n}$, we may assume that
there are also Jordan disks $W_n$ with $f^{q_n}_n:V_n\to W_n$ being PL-maps.
By Lemma \ref{l:euclid}, a neighborhood of $K$ is contained in all sets $U_n$.
Passing to a subsequence, assume that there is a domain $U$ such that for all $n$ we have
$K_n\subset U\Subset U_n$, and $\mod(U\setminus K_n)\ge m$ for some $m>0$.

\begin{lem}\label{l:UinF}
The set $U$ is contained in the Fatou set of $f$.
\end{lem}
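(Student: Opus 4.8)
The plan is to argue by contradiction. Suppose $U\cap J(f)\neq\varnothing$, fix $z_0\in U\cap J(f)$, and pick a round disk $D_0$ with $z_0\in D_0$ and $\overline{D_0}\Subset U$. Before using the dynamics I would record two elementary facts: since each $K_n$ is connected and has at least two points, the Hausdorff limit $K$ is a nondegenerate, hence infinite, continuum; and by Lemma~\ref{l:euclid} one has $\dist(K_n,\partial U)\ge\tfrac12\tau(\mod(U\setminus K_n))\,\diam_e(K_n)\ge\tfrac14\tau(m)\,\diam_e(K)>0$ for all large $n$, so the $K_n$ eventually lie in a fixed compact $L\Subset U$, and the same lemma (in the spherical metric) also bounds $\mod(U\setminus K_n)$ from above, so after passing to a subsequence $\mod(U\setminus K_n)\in[m,M]$ for some fixed $M$.

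Next I would transfer the Julia point into the renormalization picture. Since $z_0\in J(f)$, the family $\{f^k\}_{k\ge0}$ is not normal at $z_0$, so by Montel's theorem $\bigcup_{k\ge0}f^k(D_0)$ omits at most two points of $\P^1$; as $K$ is infinite, some $a\in K$ lies in this union, say $a\in f^N(D_0)$ for some $N\ge1$, with $f^N(D_0)$ open. Because $f_n^N\to f^N$ uniformly on $\overline{D_0}$ and $K_n\ni a_n\to a$, for all large $n$ there is $\zeta_n\in D_0\subset U\Subset U_n$ with $f_n^N(\zeta_n)=a_n\in K_n$. Since $K_n$ is the filled Julia set of the PL map $f_n^{q_n}\colon U_n\to V_n$, the entire forward $f_n^{q_n}$-orbit of $a_n$ stays in $K_n\subset L$, and therefore $f_n^{N+jq_n}(\zeta_n)\in L\Subset U_n$ for every $j\ge0$: the orbit of $\zeta_n$ returns infinitely often, at the times $N+jq_n$, to a fixed compact set lying deep inside $U$.

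The crux is to turn this eternal return into a contradiction with $z_0\in J(f)$. The ingredients are: the renormalization degree is bounded, $d^*\le 2^{2d-2}$, so every branched covering in sight --- in particular the iterated pull-backs $f_n^{q_n}\colon U_n^{(\ell+1)}\to U_n^{(\ell)}$ of the nested neighborhoods $K_n\subset\cdots\subset U_n^{(1)}\subset U_n^{(0)}=U_n$ with $\bigcap_\ell U_n^{(\ell)}=K_n$ --- has uniformly bounded degree and controlled Koebe distortion of its inverse branches; the annulus $U\setminus K_n$ has modulus pinned in a fixed compact subinterval of $(0,\infty)$, so it is, up to bounded conformal distortion, a fixed collar around $K_n$; and the proper self-maps of Jordan disks $f_n^{q_n}$ above are Schwarz--Pick contractions of the relevant hyperbolic metrics. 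Pulling back a disk of fixed spherical radius around the orbit $\{f_n^{jq_n}(a_n)\}_j\subset L$ along the inverse branches that follow this orbit, one should be able to show that $\{f_n^{N+jq_n}\}_{j\ge0}$, restricted to a fixed sub-disk $D_1\ni z_0$, has images in a fixed compact subset of $\P^1$ omitting at least two points, uniformly in $j$ and $n$; Montel then makes this family normal, a diagonal extraction $n\to\infty$ carries the normality over to the iterates of $f$ along these arithmetic progressions of times, and (post-composing once more with branches of $f^{-N}$ via Montel) $\{(f^q)^j\}_j$ becomes normal at $z_0$, contradicting $z_0\in J(f)=J(f^q)$. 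I expect this last distortion/equicontinuity estimate to be the main difficulty --- one has to prevent the branches followed along the (possibly combinatorially unbounded) orbit of $a_n$ from either collapsing or spreading over the whole sphere --- and it is exactly there that the \emph{two-sided} control of $\mod(U\setminus K_n)$, together with $d^*\le 2^{2d-2}$, is needed.
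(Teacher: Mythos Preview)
Your argument has a genuine gap at exactly the point you flag as ``the main difficulty'', and the gap is not a matter of missing details: the claim itself is false. You want a fixed sub-disk $D_1\ni z_0$ such that the images $f_n^{N+jq_n}(D_1)$ stay in a fixed compact set for all $j\ge 0$. But the polynomial-like map $f_n^{q_n}:U_n\to V_n$ is \emph{expanding} on the annulus $U_n\setminus K_n$ (it is an unbranched degree $d^*$ cover onto $V_n\setminus K_n$, and the inclusion $U_n\setminus K_n\hookrightarrow V_n\setminus K_n$ strictly contracts the hyperbolic metric). Equivalently, the iterated pullbacks $U_n^{(\ell)}$ shrink to $K_n$: for any disk $B$ of fixed radius around a point of $K_n$, the component of $(f_n^{q_n})^{-j}(B)$ through that point is contained in $U_n^{(j')}$ for $j'$ comparable to $j$, and hence its diameter goes to $0$ as $j\to\infty$. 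Composing with one further pullback by $f_n^N$ does not help. So the pullbacks you describe do \emph{not} contain a $j$-independent disk $D_1$; the bound $d^*\le 2^{2d-2}$ controls a single renormalization step, not a product of $j$ of them (whose degree is $(d^*)^j$). Note also that $D_1$ cannot map into $K_n$ under any holomorphic map, since $K_n$ has empty interior. Finally, even if you had normality of $\{f_n^{N+jq_n}\}_j$ for each $n$, the ``diagonal extraction'' to conclude normality of $\{f^k\}_k$ at $z_0$ is not a valid step: you only see iterates along a sparse arithmetic progression of a \emph{varying} map, which says nothing about the full family $\{f^k\}$.

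The paper's proof goes in the opposite direction and is much shorter. Instead of trying to show iterates stay small near $z_0$, it uses the standard blowing-up property of the Julia set: if $U$ meets $J(f)$ then for large $k$ the set $f^k(U)$ covers $\P^1$ minus the (at most two-point) exceptional set $E_f$. Pick such a $k$ with $k\le q_n$ for all $n$ (possible since $q_n\to\infty$). Since $f_n^k\to f^k$ uniformly, $f_n^k(U)$ also covers essentially all of $\P^1$ for large $n$, and then $f_n^{q_n}(U)\supset f_n^{q_n-k}(f_n^k(U))$ is all of $\P^1$ (or, in the polynomial case, contains $J(f_n)$). Either conclusion contradicts $f_n^{q_n}(U)\subset V_n$ with $V_n$ a Jordan disk containing only the single small PL Julia set $K_n$. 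No distortion estimates or Koebe-type control are needed.
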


\begin{proof}
The \emph{exceptional set} of $f$ is the maximal finite subset $E_f\subset\P^1$
with the property $f^{-1}(E)\subset E$.
By \cite[Lemma 4.9]{mil06}, $E_f$ consists of one or two points.
Moreover, if $E_f$ is nonempty, then $f^2$ is Moebius conjugate to a polynomial.
Assume, by way of contradiction, that $U\cap J(f)\ne\0$.
Then, by \cite{mil06} (see Corollary 14.2 and the following remark),
the complement of $f^j(U)$ is a subset of
an arbitrarily small neighborhood of $E_f$, for all sufficiently large $j$.
Since $q_n\to\infty$, we may assume that $q_1$ is already sufficiently large, so that
$f^{q_1}(U)\cup O(E_f)=\P^1$ for a small neighborhood $O(E_f)$ of $E_f$
(if $E_f$ is empty, then $f^{q_1}(U)=\P^1$).
We may also assume that $q_1<q_2<\dots<q_n<\dots$. Consider two cases.

(1) $E_f=\0$; then $f^{q_1}(U)=\P^1$, and since $f_n^{q_1}\to f^{q_1}$ as $n\to\infty$, by compactness of $\P^1$
it follows that $f_n^{q_n}(U)=\P^1$ for large $n$, a contradiction with $f_n^{q_n}(U)\Subset V_n$.

(2) $E_f\ne\0$;
then we may assume that $f$ is a polynomial,
possibly replacing $f$ with $f^2$ and $f_n$ with $f_n^2$.
In this case $f^{q_1}(U)$ is the whole of $\C$ except, perhaps, for a small neighborhood of $\infty$
 and $V_n\supset f_n^{q_1}(U)\supset J(f_n)$ for large $n$. This is a contradiction as $f_n^{q_1}(K^*)$ is a proper subset of $J(f_n)$
 and no points of $J(f_n)$ escape $V_n$ under iterates of $f_n$.
\end{proof}

It follows that $U\subset \Omega$ where $\Omega$ is a Fatou component of $f$.
The set $\Omega$ cannot be a component of the basin of an attracting cycle of $f$
as otherwise, for large $n$, the PL set $K_n\subset U$ would be in the basin of
an attracting cycle of $f_n$, a contradiction. By way of contradiction assume that
$\Omega$ is not in the basin of a parabolic cycle of $f$.
Thus $\Omega$ is eventually mapped to a periodic rotation domain (Siegel disk or Herman ring) under $f$.
It is safe to assume that $\Omega$ is itself a periodic rotation domain of period $p$.

As $\mod(U\setminus K_n)\ge m$, there is $\delta>0$ such that $\mathrm{dist}(\partial U, K_n)>\delta$ for all $n$.
Now, since $f^p: \Omega\to\Omega$ is conjugate to an irrational rotation,
one can fix $s>0$ such that, for each $z\in \overline{U}$ we have $\mathrm{dist}(z, f^{sp}(z))<\delta/2$.
As $f_n\to f$, it follows that for every $n$ large enough,
$f_n^{sp}(K_n)\subset U\subset U_n$ and $f_n^{sp}(K_n)\neq K_n$.
The sets $K_n$ and $K'_n=f_n^{sp}(K_n)$ are in the same PL cycle and $K'_n\setminus K_n$
is nonempty set of points which do not escape $U\subset U_n$ under iterates of $f_n$, a contradiction again.
\end{proof}

All conditions (1) -- (3) are essential for the conclusion of Theorem B.
Namely, it is clear that the conclusion breaks down without condition (2).
As for conditions (1) and (3),
counterexamples can be found in the real unimodal family $f_c(z)=z^d+c$, with $c\in\R$,
for every even positive $d$. Indeed, in that case it is known that there is a universal complex bound for a specific choice
of domains for any renormalization of $f_c$ of period $s$ whenever $f_c^{2s}$
has no attracting or neutral fixed point, see Theorem A of \cite{LS}
(complemented by the following paragraph), see also \cite{GSb,LyYab}.
Thus it is enough to find corresponding examples of real $f_c$ with periodic intervals.
It follows that the conclusion breaks down without condition (1).
As for (3), the following counterexample shows that this assumption is also necessary.

\medskip

\begin{prop}
\label{ex:Ex}
One can choose a sequence $f_{a_n}(z)=z^2+a_n$ of real quadratic polynomials with connected Julia sets as follows. For each $n$ there is a renormalization
$f_{a_n}^{q_n}: U_n\to V_n$ of period $q_n$ with a single critical point at $0$, connected PL set $K_n$, such that $K_n\to K$ in the Hausdorff metric
where $K$ is non-degenerate, $q_n\to \infty$ while, for some $\delta>0$, the modulus of each fundamental annulus $V_n\setminus U_n$ is bigger than $\delta$.
\end{prop}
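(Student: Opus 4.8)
The plan is to realize the required sequence inside the real quadratic family by using the classical fact (Theorem A of \cite{LS}, see also \cite{GSb,LyYab}) that real unimodal maps with only repelling cycles of the relevant periods enjoy \emph{universal} complex a priori bounds: there is $\delta>0$ so that for every (suitably deep) renormalization $f_c^{s}$ of a real $f_c(z)=z^2+c$ for which $f_c^{2s}$ has no attracting or neutral fixed point, there is a choice of polynomial-like restriction $f_c^s\colon U\to V$ with $\mod(V\setminus U)>\delta$. Thus the only real task is to \emph{produce} one real parameter at which $f_c$ is simultaneously renormalizable of arbitrarily large period, with the successive small Julia sets converging in the Hausdorff metric to a nondegenerate continuum, and with all the periodic points involved repelling so that the quoted bound applies. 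The periods $q_n\to\infty$ and the uniform modulus $>\delta$ then come for free from the cited theorem; the condition that $f$ (the limit) lie in a parabolic domain — which is what condition (3) of Theorem B forbids — will be arranged by choosing the $a_n$ to converge to a parabolic parameter.

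Concretely, first I would fix a parabolic parameter $a_\ast$ on the boundary of the main cardioid of the Mandelbrot set (the simplest choice $a_\ast=1/4$, where $f_{a_\ast}$ has a parabolic fixed point with multiplier $1$), so that the immediate basin of the parabolic point is a forward-invariant Fatou component on which the dynamics is a parabolic germ. Next I would invoke parabolic implosion: as $a\to a_\ast$ from inside (along the real axis, through parameters with $f_a$ having a small attracting or repelling real fixed point near the parabolic one), iterates $f_a^{N(a)}$ with $N(a)\to\infty$ converge, on compact subsets of the perturbed basin, to Lavaurs' map $g_{\sigma}$ (see, e.g., \cite{Z}). Choosing a Lavaurs map $g_\sigma$ that exhibits a nondegenerate renormalization (equivalently, an attracting or renormalizable cycle for the Lavaurs dynamics) — which is possible since the family of Lavaurs maps $\{g_\sigma\}$ sweeps out a rich nonlinear family — one gets, by continuity/persistence of polynomial-like mappings under $C^0$-limits (a standard implication of Lavaurs' theorem applied to the appropriate high iterate), genuine renormalizations $f_{a_n}^{q_n}\colon U_n\to V_n$ for a sequence $a_n\to a_\ast$ with $q_n\to\infty$. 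By keeping $a_n$ on the real line and away from centers of hyperbolic windows, one guarantees that $f_{a_n}^{2q_n}$ has no attracting or neutral fixed point, so \cite{LS} supplies the uniform modulus.

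For the Hausdorff convergence $K_n\to K$ with $K$ nondegenerate: the small Julia sets $K_n$ are the filled Julia sets of the straightened maps, and under the implosion picture they shadow the filled Julia set of the corresponding renormalization of the Lavaurs map $g_\sigma$. I would argue that this limiting set is nondegenerate precisely because the Lavaurs renormalization is genuinely quadratic-like (degree $2$, connected Julia set with more than one point), and that the sets $K_n$ converge to it in the Hausdorff metric by the continuity of filled Julia sets of polynomial-like maps with uniformly bounded and bounded-away-from-zero moduli (the Beltrami/straightening argument, using that the straightening maps have controlled distortion by the modulus bound). Passing to a subsequence of $n$ ensures an honest Hausdorff limit $K$. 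This identification of the limit and its nondegeneracy is the step I expect to be the main obstacle, because it requires marrying the parabolic-implosion convergence (which is only locally uniform on the perturbed basin, and involves the delicate ``\'ecart'' phases $N(a)$) with the polynomial-like compactness machinery; one must check that the domains $U_n,V_n$ can be chosen so that the renormalized data converge, not merely the maps on a fixed compactum. Everything else — the existence of $\delta$, the divergence of $q_n$, and the real-analyticity of the construction — is either cited or routine. It may be cleanest to choose $a_\ast$ and the target Lavaurs map so that the Lavaurs renormalization is a Feigenbaum-type (period-doubling) one, where nondegeneracy and the structure of the small Julia set are most transparent.
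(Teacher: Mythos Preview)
Your overall strategy coincides with the paper's: reduce the modulus bound to the universal complex bounds of \cite{LS,GSb,LyYab} for real maps without neutral/attracting cycles of the relevant periods, and manufacture the sequence $(a_n)$ by parabolic implosion so that $a_n\to a_\ast$ with $a_\ast$ parabolic and $q_n\to\infty$. The paper, however, makes a different and essential choice of the parabolic parameter, and handles the ``nondegenerate Hausdorff limit'' step in a more elementary, purely one-dimensional way than you outline.

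The concrete problem with your proposal is the choice $a_\ast=1/4$. For real $a>1/4$ the critical orbit of $z^2+a$ escapes, so the Julia set is disconnected and there is no renormalization at all; for real $a<1/4$ (near $1/4$) the map has an attracting fixed point, so there is no periodic interval of large period and the \cite{LS} bound does not even apply. Thus neither real one-sided approach to $1/4$ produces the sequence you need. The paper instead takes $a_\ast=a$ to be the real parameter at which $f_a$ has a \emph{parabolic $3$-cycle} of multiplier $1$ (the cusp of the real period-$3$ window) and approaches it from the right, $a_n\searrow a$, so that all $a_n\in[-2,1/4]$ give connected Julia sets while the $3$-cycle has just disappeared and orbits spend $N_n\to\infty$ iterates of $f^3$ passing through the gate.

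For the nondegeneracy of the limit, you anticipate needing continuity of filled Julia sets of polynomial-like maps with controlled moduli. The paper avoids this entirely by working on the real line: it builds the Lavaurs map $g_\sigma$ of $F=f_a^3$, sets $G_\sigma=f_a^2\circ g_\sigma$, shows (via the Douady--Lavaurs limit and $Sf_a<0$) that $SG_\sigma<0$, and then uses elementary interval dynamics to locate a repelling fixed point $\beta_\sigma$ and a Chebyshev-type phase $\sigma_{Ch}$. Choosing $\sigma_*$ near $\sigma_{Ch}$ gives a unimodal $G_{\sigma_*}\colon L_*\to L_*$ on a nondegenerate interval $L_*$ with no attracting/neutral $1$- or $2$-cycle. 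The Douady--Lavaurs theorem then produces $a_n\searrow a$ and $q_n=3N_n+2$ with symmetric $f_{a_n}$-periodic intervals $L_n\to L_*$; since $K_n\supset L_n$, any Hausdorff limit of $K_n$ contains $L_*$ and is nondegenerate. This sidesteps the delicate issue you flagged about matching implosion convergence with polynomial-like compactness.
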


\begin{proof}
It is enough to choose a sequence $a_n\in [-2,0]$ such that:
\begin{enumerate}
  \item[$(i)$] the sequence $a_n$ is decreasing, and $a_n\searrow a$; %where
  \item[$(ii)$] the map $f_a$ has a parabolic 3-cycle of multiplier $1$;
  \item[$(iii)$] there is a symmetric w.r.t. $0$ and $f_{a_n}$-periodic interval $L_n$ of period $q_n$ such that
   $f_{a_n}^{q_n}: L_n\to L_n$ is a unimodal map;
  \item[$(iv)$] maps $f_{a_n}^{2q_n}$ have no attracting/parabolic fixed points;
  \item[$(v)$] periods $q_n$ tend to infinity;
  \item[$(vi)$] diameters of $L_n$ stay away from $0$.
\end{enumerate}
Indeed, then, by applying the above universal complex bounds, for the sequence $f_{a_n}$ and a special choice of
renormalizations $f_{a_n}^{q_n}:U_n\to V_n$ where $L_n\subset U_n\Subset V_n$, there exists $\delta>0$ such that $\mod(V_n\sm U_n)>\delta$.

The sequence $(a_n)$ can be defined using Lavaurs maps. Here is a detailed construction
motivated by \cite{LZ}. Start with a map $f_a$ satisfying $(ii)$.
Set $F=f_a^3$, and let $x_0<0$ be the point of the parabolic $3$-cycle of $f$ that
contains $0$ in its immediate basin of attraction.
Locally near the point $x_0$, we have $F(z)=z+A(z-x_0)^2+B(z-x_0)^3+...$ where $A<0$.
Let $[c_{-2},0]$ be the maximal interval containing $x_0$ on which $F$ is increasing.
Here $c_{-2}<0$, $f_a^2(c_{-2})=0$ and $F(c_{-2})=c_1:=f_a(0)$ where $c_1<c_{-2}$.
The interval $(c_1, c_{-2})$ contains a point $c_{-1}$ such that $f_a(c_{-1})=0$ so that
$f_a^2$ is decreasing on $[c_1, c_{-1}]$ and increasing on $[c_{-1}, 0]$.
Thus we have the following order of points:
$
c_1<c_{-1}<c_{-2}<x_0.
$
Consider the fundamental interval $I_{-}=[F(0), 0]$ of the immediate attracting basin of $x_0$. This means that
the sets $F^n(I_-)$ for $n\ge 0$ have disjoint interiors, and their union covers the interval $(x_0,0]$
all of whose points converge to $x_0$ under forward iterations of $F$.
Also, consider the interval $I_{+}=[c_1, c_{-2}]$.
This is a fundamental interval for the backward iteration of $F^{-1}: [c_1, x_0]\to [c_{-2}, x_0]$
which, from now on, denote the inverse branch of strictly increasing $F: [c_{-2}, x_0]\to [c_1, x_0]$.

Now, consider attracting ($\varphi_{-}$) and repelling ($\varphi_{+}$) Fatou coordinates of $F$ near $x_0$, see, e.g., \cite{mil06,Sh}.
Recall briefly the definition.
Let $\delta>0$ be small and $D_{-}$, $D_{+}$ be the disks centered at $x_0 + \delta$, $x_0 - \delta$ respectively with radius
$\delta$. Then $F(D_{-})\subset D_{-}$ while $D_{+}\subset F(D_{+})$. This defines, after
identification of $z$ with $F(z)$ at the boundary, two cylinders $U_{-} = D\setminus F(D_{-})$
and $U_{+}=F(D_{+})\setminus D_{+}$.
By the Riemann mapping theorem, $U_{\pm}$ can be uniformized by
"straight" cylinders: there exist $\varphi_{\pm}$ mapping the cylinders $U_{\pm}$ conformally onto
vertical strips of width $1$ conjugating $F$ to $T_1$ where $T_\sigma: z\mapsto z+\sigma$ denotes the translation by $\sigma$. That is,
\begin{equation}\label{-}
\varphi_{-}(F(z))=T_1(\varphi_{-}(z))
\end{equation}
for $z\in U_{-}$ and, correspondingly,
\begin{equation}\label{+}
\varphi_{+}(F(z))=T_1(\varphi_{+}(z))
\end{equation}
for $z\in U_{+}$.
By symmetry, we may assume that $\varphi_{\pm}(\bar z)=\overline{\varphi_{\pm}(z)}$.

Then $\varphi_{-}$ extends by (\ref{-}) to an orientation reversing homeomorphism
$\varphi_{-}: (x_0, 0]\to [\varphi(0), +\infty)$
and $\varphi_{+}$ extends by (\ref{+}) to an orientation reversing homeomorphism
$\varphi_{+}: [c_1, x_0)\to (-\infty, \varphi_{+}(c_1)]$.
So, $\varphi_{-}(I_{-})=[\varphi_{-}(0), \varphi_{-}(F(0))]$
with $\varphi_{-}(F(0))=\varphi_{-}(0)+1$
and $\varphi_{+}(I_{+})=[\varphi_{+}(c_{-2}),$ $\varphi_{+}(c_1)]$
with $\varphi_{+}(c_1))=\varphi_{+}(c_{-2})+1$.
Notice that the Fatou coordinates $\varphi_{\pm}$ are unique up to post-com\-po\-sition by a real translation.
This allows us to fix the choice of $\varphi_{\pm}$ in such a way that
$X:=\varphi_{-}(0)=\varphi_{+}(c_{-2})$
which means that $\varphi_{-}((x_0, 0])=[X, +\infty)$ while
$\varphi_{+}([c_1, x_0))=(-\infty, X+1]$.
Hence, $\varphi_{-}(I_{-})=\varphi_{+}(I_{+})=[X, X+1]$ and
the following map is a well-defined orientation preserving homeomorphism:
$g_0:=\varphi_{+}^{-1}\circ \varphi_{-}: I_{-}\to I_{+}$.
More generally, let
\begin{equation}
g_\sigma=\varphi_{+}^{-1}\circ T_\sigma\circ \varphi_{-}
\end{equation}
be the Lavaurs map.  If $\sigma\le 0$ then $T_\sigma\circ \varphi_{-}(I_{-})=[X+\sigma, X+1+\sigma]\subset \varphi_{+}([c_1, x_0))$,
i.e., for each $\sigma\le 0$, the map $g_\sigma: I_{-}\to [c_1, x_0)$ is a well-defined orientation preserving homeomorphism on its image
$[g_\sigma(F(0))),$ $g_\sigma(0)]$ where $g_\sigma(F(0))=F(g_\sigma(0))$.
When $\sigma$ monotonically moves from $0$ to the left, the end points $g_\sigma(F(0))$, $g_\sigma(0)$ of the image $g_\sigma(I_{-})$ move monotonically to the right.
There is a unique $\sigma_0<0$ such that $g_{\sigma_0}(F(0))=c_{-1}$.
For every $\sigma\in [\sigma_0, 0]$, there exists a unique solution $q_\sigma\in I_{-}$ of the equation
$g_\sigma(x)=c_{-1}$, so that $g_\sigma([q_\sigma, 0])=[c_{-1}, g_\sigma(0)]$.
Note that $q_\sigma$ increases from $q_0$ to $q_{\sigma_0}$ as $\sigma$ decreases from $0$ to $\sigma_0$.

Let $G_\sigma=f^2_a\circ g_\sigma$. As $f^2_a$ increases on $[c_{-1}, 0]$, for every $\sigma\in [\sigma_0, 0]$,
$G_\sigma: [q_\sigma, 0]\to [c_1, f^2_a(g_\sigma(0))]$ is an orientation preserving homeomorphism.
The map $g_\sigma$ extends immediately by symmetry to an even map on $[F(0), -F(0)]$ (which is again denoted by $g_\sigma$). Therefore,
we get a unimodal map $G_\sigma$ on $[q_\sigma, -q_\sigma]$, for each $\sigma\in [\sigma_0, 0]$.

The following holds:
(a) $G_\sigma$ is increasing on $[q_\sigma, 0]$ and is an even function on $[q_\sigma, -q_\sigma]$, for $\sigma_0\le\sigma\le 0$,
(b) $G_\sigma(q_\sigma)=c_1<q_\sigma<0$ for $\sigma_0\le\sigma\le 0$,
(c) $G_0(0)=0$ and $G_{\sigma_0}(0)=-c_{-2}>-F(0)>-q_{\sigma_0}>0$,
(d) $G_\sigma(0)$ decreases from $G_{\sigma_0}(0)>0$ to $G_0(0)=0$ as $\sigma$ increases from $\sigma_0$ to $0$.

Indeed, (a) -- (b) hold by the construction and $G_0(0)=f^2_a(c_{-2})=0$. Now,
$G_{\sigma_0}(0)=f^2_a(F^{-1}(c_{-1}))$ is a point of $f^{-1}_a(\{c_{-1}\})$ where $c_{-1}<0$ and $f_a(c_{-1})=0$.
On the other hand, $c_1<c_{-1}<c_{-2}<F^{-1}(c_{-1})<0$ and $f^2_a$ increases on $[c_{-1}, 0]$, hence,
$0<G_{\sigma_0}(0)$. There is just one positive point of $f^{-1}_a(\{c_{-1}\})$, which is $-c_{-2}$.
To finish with (c), it remains to note that $c_{-2}<x_0<F(0)<q_{\sigma_0}$.
Finally, (d) follows from (c).

Now, (\ref{-}) allows us to extend $\varphi_{-}$ from $U_{-}$ to an analytic function to the basin of attraction $\Delta$ of the parabolic $3$-cycle of $f_a$ while the inverse map $\varphi_{+}^{-1}$ extends by (\ref{+}) from $\varphi_{+}(U_{+})$ to an entire function.
Hence, $g_\sigma$ extends to an analytic function to $\Delta$.
The main purpose of introducing $g_\sigma$ (and our use of it) is the following
theorem due to Douady and Lavaurs \cite{Do} (stated in the particular case of $3$-cycle) as follows:
{\it for every $\sigma\in\R$
there exists a sequence $a_n\searrow a$ and an increasing sequence of positive integers $N_n$ such that
$g_{\sigma}(z)=\lim_{n\to\infty} f_{a_n}^{3 N_n}(z)$ uniformly on compact subsets of $\Delta$}.

As $[F(0), -F(0)]\subset \Delta$ and $f_a$ has a negative Schwarzian derivative $Sf_a<0$ on $\R$ this theorem implies, in particular,
that $S g_\sigma\le 0$ on $[F(0), -F(0)]$. (This
also follows directly from the fact that $g_\sigma^{-1}$ extends from the real interval to a univalent function of the upper half plane into itself, see \cite{LZ}.)
 As $G_\sigma=f^2_a\circ g_\sigma$ and $Sf^2_a<0$ on $\R$, then
$SG_\sigma<0$ on $[F(0), -F(0)]$. Now, $G_\sigma$ has on $[F(0), -F(0)]$ precisely $3$ critical points:
$0$ and two symmetric critical points at $\pm q_\sigma$.
Along with relations $G_\sigma(q_\sigma)<q_\sigma<0<G_\sigma(0)$ for $\sigma_0\le \sigma<0$, $G_0(0)=0$, and general properties
of maps with the negative Schwarzian \cite{MS}
we obtain that, for each $\sigma\in [\sigma_0, 0]$ the map $G_\sigma$ has a unique (orientation preserving) fixed point $\beta_\sigma\in (q_\sigma, 0)$. Moreover, it is repelling. So we have a unimodal restriction of $G_\sigma$ to $L_\sigma:=[\beta_\sigma, -\beta_\sigma]$.
Now, as $\beta_0<0=G_0(0)<\beta_0$ while $-\beta_{\sigma_0}<G_{\sigma_0}(0)$, by continuity,
there is a Chebyshev parameter $\sigma_{Ch}\in (\sigma_0, 0)$, i.e., such that $G^2_{\sigma_{Ch}}(0)=\beta_{Ch}$.
Let us fix any $\sigma_*\in (\sigma_{Ch}, 0)$ such that $G_*:=G_{\sigma_*}: L_*\to L_*$ where $L_*=L_{\sigma_*}$
has no attracting or neutral fixed point or $2$-cycle. For example, any $\sigma_*$ close enough $\sigma_{Ch}$ would work. Notice that $L_*\subset [F(0), -F(0)]$. By the earlier stated theorem of Douady and Lavaurs,
there exists a sequence $a_n\searrow a$ and an increasing sequence of positive integers $N_n$ such that
$g_{\sigma_*}(z)=\lim_{n\to\infty} f_{a_n}^{3 N_n}(z)$ uniformly in some complex neighborhood of $[F(0), -F(0)]$.
Let $q_n=3 N_n+2$. Then, for each large enough $n$, the map $f_{a_n}$ has a symmetric periodic interval $L_n\ni 0$ of period $q_n$ and
$L_n\to L_*$ as $n\to\infty$. Besides, $f_{a_n}^{2q_n}$ has no attracting or neutral fixed point on $L_n$.
It follows that the sequence $(a_n)$ is as required.
Notice that in Example \ref{ex:Ex} %that follows
the number of components of the orbit of $K_n$
of size at least $k$ asymptotically, as $n\to\infty$, is $C k^{-1/2}$ with some $C>0$.
\end{proof}

\end{document}